\theoremstyle{theorem}
\newtheorem{thm}{Theorem}
\theoremstyle{definition}
\newtheorem{rem}[thm]{Remark}
\newtheorem{lem}[thm]{Lemma}
\newtheorem{defn}[thm]{Definition}
\newtheorem{cor}[thm]{Corollary}
\newtheorem{claim}[thm]{Claim}
\newcommand{\Q}{\mathbb{Q}}
\newcommand{\N}{\mathbb{N}}
\newcommand{\pr}[1]{\left ( #1 \right )}
\newcommand{\abs}[1]{\left | #1 \right |}
\newcommand{\br}[1]{\left \{ #1 \right \}}
\newcommand{\bra}[1]{\left [ #1 \right ]}
\newcommand{\sE}{\mathcal{E}}
\newcommand{\sB}{\mathcal{B}}
\newcommand{\bu}{\bar{u}}
\newcommand{\nor}{\mathcal{N}}
\newcommand{\norp}{{\mathcal{N}}^\perp}
\newcommand{\loe}{{\beta}^{-}}
\newcommand{\upe}{{\beta}^{+}}
\newcommand{\bs}{\boldsymbol{\Sigma}}
\newcommand{\bp}{\boldsymbol{\Pi}}
\newcommand{\bd}{\boldsymbol{\Delta}}
\newcommand{\res}{\restriction}
\newcommand{\conc}{{}^\smallfrown}
\newcommand{\ds}{\displaystyle}
\newcommand{\MM}{\mathcal{M}}
\newcommand{\GG}{\mathcal{G}}
\author[D. Airey]{Dylan Airey}
\address[D. Airey]{
Department of Mathematics, University of Texas at Austin, 2515 Speedway, Austin, TX 78712-1202, USA}
\email{dylan.airey@utexas.edu}
\author[S. Jackson]{Steve Jackson}
\address[S. Jackson]{Department of Mathematics, University of North Texas, General Academics Building 435, 1155 Union Circle,  \#311430, Denton, TX 76203-5017, USA}
\email{stephen.jackson@unt.edu}
\author[B. Mance]{Bill Mance}
\address[B. Mance]{Institute of Mathematics of Polish Academy of Science, \'{S}niadeckich 8, 00-656 Warsaw, Poland}
\email{Bill.A.Mance@gmail.com}
\title{Some complexity results in the theory of normal numbers}
\begin{document}

\begin{abstract}
Let $\nor(b)$ be the set of real numbers which are normal to base $b$. 
A well-known result of H. Ki and T. Linton~\cite{KiLinton} is that $\nor(b)$
is $\bp^0_3$-complete. We show that the set $\norp(b)$ of reals
 which preserve $\nor(b)$ under addition is also $\bp^0_3$-complete. 
We use the characteriztion of $\norp(b)$ given by G. Rauzy in terms of
an entropy-like quantity called the \textit{noise}. It follows from our results that no further 
characteriztion theorems could result in a still better bound on the complexity of $\norp(b)$.
We compute the exact descriptive complexity
of other naturally occurring sets associated with noise. One of these is complete 
at the $\bp^0_4$ level. Finally, we get upper and lower bounds on the Hausdorff dimension
of the level sets associated with the noise.
\end{abstract}

\maketitle

\section{Introduction}

Let $b\geq 2$ be a positive integer. Every real number $x$ has a base $b$ expansion 
$x=\lfloor x\rfloor +\sum_{n=0}^\infty \frac{c_n}{b^n}$, and this expansion is unique if 
we adopt the convention that a tail of the coefficients $c_n$ cannot be equal to $b-1$. 
Recall $x$ is {\it $b$-normal} if for every $B=(i_0,\dots,i_{\ell-1})\in b^{<\omega}$
we have that $\lim_{N \to \infty} \frac{1}{N} |I(x,B,N)| =\frac{1}{b^\ell}$, where 
$I(x,B,N)=\{ i<N \colon (c_i,c_{i+1},\dots,c_{i+\ell-1})=B\}$. For a real number $r$, 
define real functions $\pi_r$ and $\sigma_r$ by $\pi_r(x)=rx$ and $\sigma_r(x)=r+x$. 
We let $\nor(b)$
denote the set of reals $x$ which are normal to base $b$. We let 
$$
\norp(b)
= \{ y \colon \forall x \in \nor(b)\ (x+y)\in \nor(b)\}.
$$ 

\subsection{Normality preserving functions}
Let $f$ be a function from $\mathbb{R}$ to $\mathbb{R}$.  We say that
$f$ {\it preserves $b$-normality} if $f(\nor(b)) \subseteq \nor(b))$.
We can make a similar definition for preserving normality with respect
to the regular continued fraction expansion, $\beta$-expansions,
Cantor series expansions, the L\"uroth series expansion, etc.

Several authors have studied $b$-normality preserving functions.  They
naturally arise in H. Furstenberg's work on disjointness in ergodic
theory\cite{FurstenbergDisjoint}.  V. N. Agafonov
\cite{AgafonovNormal}, T. Kamae \cite{Kamae}, T. Kamae and B. Weiss
\cite{KamaeWeiss}, and W. Merkle and J. Reimann \cite{MerkleReimann}
studied $b$-normality preserving selection rules.  The situation for
continued fractions is very different.  Let $[a_1,a_2,a_3,\ldots]$ be
normal with respect to the continued fraction expansion.  B. Heersink
and J. Vandehey \cite{HeersinkVandehey} recently proved that for any
integers $m \geq 2, k \geq 1$, the continued fraction
$[a_k,a_{m+k},a_{2m+k},a_{3m+k},\ldots]$ is never normal with respect
to the continued fraction expansion.

In 1949 D. D. Wall proved in his Ph.D. thesis \cite{Wall} that for
non-zero rational $r$ the function $\pi_r$ is $b$-normality
preserving for all $b$ and that the function $\sigma_r$ is
$b$-normality preserving for all $b$ whenever $r$ is rational.  These
results were also independently proven by K. T. Chang in 1976
\cite{ChangNormal}.  D. D. Wall's method relies on the well known
characterization that a real number $x$ is normal in base $b$ if and
only if the sequence $(b^nx)$ is uniformly distributed mod
$1$\cite{KuN}.

D. Doty, J. H. Lutz, and S. Nandakumar took a substantially different
approach from D. D. Wall and strengthened his result.  They proved in
\cite{LutzNormalityPreserves} that for every real number $x$ and every
non-zero rational number $r$ the $b$-ary expansions of $x, \pi_r(x),$
and $\sigma_r(x)$ all have the same finite-state dimension and the
same finite-state strong dimension.  It follows that $\pi_r$ and
$\sigma_r$ preserve $b$-normality.  It should be noted that their
proof uses different methods from those used by D. D. Wall and is
unlikely to be proven using similar machinery.

C. Aistleitner generalized D. D. Wall's result on $\sigma_r$ in
\cite{AistleitnerNormalPreserves}.  Suppose that $q$ is a rational
number and that the digits of the $b$-ary expansion of $z$ are
non-zero on a set of indices of density zero.  He proved that the
function $\sigma_{qz}$ is $b$-normality preserving.  G. Rauzy obtained
a complete characterization of $\norp(b)$ in
\cite{RauzyNormalPreserving}. M. Bernay used this characterization to
prove that $\Sigma_b$ has zero Hausdorff dimension \cite{Bernay}.  One of the 
main results of this paper, stated in Corollary~\ref{maincor},  is to obtain an exact 
determination of the descriptive set theoretic complexity 
of $\norp(b)$. A significance of this is explained at 
the end of \S\ref{intro:results} below.

M. Mend\'{e}s France asked in \cite{X} if the function $\pi_r$
preserves simple normality with respect to the regular continued
fraction for every non-zero rational $r$.  This was recently settled
by J. Vandehey \cite{VandeheyRatMult} who showed that $\frac{ax+b}
{cx+d}$ is normal with respect to the continued fraction when $x$ is
normal with respect to the continued fraction expansion and integers
$a, b, c, $and $d$ satisfy $ad-bc \neq 0$.  Work was also done on the
normality preserving properties of the functions $\pi_r$ and
$\sigma_r$ for the Cantor series expansions by the first and third
author in \cite{AireyManceNormalPreserves} and additonally with
J. Vandehey in \cite{AireyManceVandehey}.  However, these functions
are not well understood in this context.

\subsection{Descriptive Complexity}

In any topological space $X$, the collection of Borel sets $\sB(X)$ is the smallest 
$\sigma$-algebra containing the open sets. They are stratified into levels,
the Borel hierarchy, by defining $\bs^0_1=$ the open sets, $\bp^0_1=
\neg \bs^0_1= \{ X-A\colon A \in \bs^0_1\}=$ the closed sets, 
and for $\alpha<\omega_1$ we let $\bs^0_\alpha$ be the collection of 
countable unions $A=\bigcup_n A_n$ where each $A_n \in \bp^0_{\alpha_n}$
for some $\alpha_n<\alpha$. We also let $\bp^0_\alpha=\neg \bs^0_\alpha$. 
Alternatively, $A\in \bp^0_\alpha$ if $A=\bigcap_n A_n$ where 
$A_n\in \bs^0_{\alpha_n}$ where each $\alpha_n<\alpha$. 
We also set $\bd^0_\alpha= \bs^0_\alpha \cap \bs^0_\alpha$, in particular 
$\bd^0_1$ is the collection of clopen sets. 
For any topological space, $\sB(X)=\bigcup_{\alpha<\omega_1} \bs^0_\alpha=
\bigcup_{\alpha<\omega_1}\bp^0_\alpha$.  All of the collections 
$\bd^0_\alpha$, $\bs^0_\alpha$, $\bp^0_\alpha$ are pointclasses, that is, they are closed 
under inverse images of continuous functions. 
A basic fact (see \cite{Kechris})
is that for any uncountable Polish space $X$, there is no collapse 
in the levels of the Borel hierarchy, that is, all the various 
pointclasses $\bd^0_\alpha$, $\bs^0_\alpha$, $\bp^0_\alpha$, for $\alpha <\omega_1$,
are all distinct. Thus, these levels of the Borel hierarch can be used
to calibrate the descriptive complexity of a set. We say a set 
$A\subseteq X$ is $\bs^0_\alpha$ (resp.\ $\bp^0_\alpha$) {\em hard} 
if $A \notin \bp^0_\alpha$ (resp.\ $A\notin \bs^0_\alpha$). This says $A$ is 
``no simpler'' than a $\bs^0_\alpha$ set. We say $A$ is $\bs^0_\alpha$-{\em complete}
if $A\in \bs^0_\alpha-\bp^0_\alpha$, that is, $A \in \bs^0_\alpha$ and 
$A$ is $\bs^0_\alpha$ hard. This says $A$ is exactly at the complexity level 
$\bs^0_\alpha$. Likewise, $A$ is $\bp^0_\alpha$-complete if $A\in \bp^0_\alpha-\bs^0_\alpha$.

A fundamental result of Suslin (see \cite{Kechris}) says that in any 
Polish space $\sB(X)=\bd^1_1=\bs^1_1 \cap \bp^1_1$, where 
$\bp^1_1=\neg \bs^1_1$, and $\bs^1_1$ is the pointclass of continuous images 
of Borel sets. Equivalently, $A \in \bs^1_1$ iff 
$A$ can be written as $x \in a \leftrightarrow \exists y\ (x,y)\in B$ 
where $B \subseteq X\times Y$ is Borel (for some Polish space $Y$). 
Similarly, $A\in \bp^1_1$ iff it is of the form $x\in A \leftrightarrow 
\forall y\ (x,y)\in B$ for a Borel $B$. The $\bs^1_1$ sets are also called the {\em analytic}
sets, and $\bp^1_1$ the {\em co-analytic sets}. We also have 
$\bs^1_1\neq \bp^1_1$ for any uncountable Polish space.

H.\ Ki and T.\ Linton \cite{KiLinton} proved that the set $\nor(b)$ is
$\bp^0_3(\mathbb{R})$-complete.  Further work was done by V.\ Becher,
P.\ A.\ Heiber, and T.\ A.\ Slaman \cite{BecherHeiberSlamanAbsNormal} who
settled a conjecture of A.\ S.\ Kechris by showing that the set of
absolutely normal numbers is $\bp^0_3(\mathbb{R})$-complete.
Furthermore, V.\ Becher and T.\ A.\ Slaman \cite{BecherSlamanNormal}
proved that the set of numbers normal in at least one base is
$\bs^0_4(\mathbb{R})$-complete.

K.\ Beros considered sets involving normal numbers in the difference
heirarchy in \cite{BerosDifferenceSet}.  Let $\nor_k(b)$ be the set of
numbers normal of order $k$ in base $b$.  He proved that for $b\geq 2$
and $s>r\geq 1$, the set $\nor_r(b) \backslash \nor_s(b)$ is
$\mathcal{D}_2(\bp_3^0)$-complete (see \cite{Kechris} for a definition
of the difference hierarchy).  Additionally, the set $\bigcup_k
\nor_{2k+1}(2) \backslash \nor_{2k+2}(2)$ is shown to be
$\mathcal{D}_\omega(\bp_3^0)$-complete.

\subsection{Results of this paper} \label{intro:results}

We are interested in determining the exact descriptive set theoretic complexity of 
$\norp(b)$ and some related sets. The definition of $\norp(b)$ 
shows that $\norp(b)$ is $\bp^1_1$, since it involves a universal quantification. 
It is not immediately clear if $\norp(b)$ is a Borel set, but this in fact 
follows from a result of Rauzy. 
Specifically, Rauzy \cite{RauzyNormalPreserving} characterized $\norp(b)$
in terms of an entropy-like condition called the {\em noise}. 
We recall this condition and associated notation
from \cite{RauzyNormalPreserving}. For any positive integer length $\ell$, let $\sE_\ell$
denote the set of functions from $b^\ell$ to $b$. 
We call an $E\in \sE_\ell$ a {\em block function} of width $\ell$. 
As in \cite{RauzyNormalPreserving}
we set, for $x \in \mathbb{R}$,  
\[
\beta_\ell(x,N)= \inf_{E\in \sE_\ell} \frac{1}{N} \sum_{n<N} \inf \{
1, |c_n-E(c_{n+1},\dots,c_{n+\ell})|\},
\]
where $c_0,c_1,\dots$ is the (fractional part) of the base $b$ expansion of $x$. 

\noindent
We also let for $E\in \sE$ 
\[
\beta_E(x,N)= \frac{1}{N} \sum_{n<N} \inf \{
1, |c_n-E(c_{n+1},\dots,c_{n+\ell})|\}.
\]

We then define the lower and upper noises $\loe(x)$, $\upe(x)$ 
of $x$ by: 
$$\loe(x)=\lim_{\ell \to \infty} \loe_\ell(x),$$ 
where 
$$\loe_\ell(x)=\liminf_{N\to \infty} \beta_\ell(x,N).$$ 
The upper entropy
$\upe(x)$ is defined similarly using 
$$\upe(x)=\lim_{\ell\to \infty} \upe_\ell(x)$$
where 
$$\upe_\ell(x)=\limsup_{N\to \infty} \beta_\ell (x,N).$$ 
For a fixed $E\in \sE$
we also let 
$$
\loe_E(x)=\liminf_{N\to \infty} \beta_E(x,N),$$ 
and similarly 
for $\upe_E(x)$.

Rauzy showed that $x\in \nor(b)$ iff  it has the maximal possible noise
in that $\loe(x)=\frac{b-1}{b}$. Furthermore, $x \in \norp(b)$ iff it has
minmal possible noise in that $\upe(x)=0$.

It is therefore natural to ask for any $s\in [0,\frac{b-1}{b}]$, what are the complexities 
of the lower and upper noise sets associated to $s$. Specifically, we introduce the 
following four sets. 

\begin{defn}
Let $s \in [0,\frac{b-1}{b}]$. Let 
\begin{equation}
\begin{split}
A_1(s)&= \{ x \colon \loe(x) \leq s\},\quad
A_2(s)=\{ x \colon \loe(x) \geq s\} \\
A_3(s)&=\{x \colon \upe(x) \leq s\},\quad
A_4(s)=\{ x \colon \upe(x) \geq s\}
\end{split}
\end{equation}
Finally, we let 
\begin{align*}
L(s)&=A_1(s)\cap A_2(s)= \{ x \colon \loe(x) =s\}\\
U(s)&= A_3(s)\cap A_4(s)=\{ x \colon \upe(x)=s\}.
\end{align*}
\end{defn}

Thus, $\nor(b)= L(\frac{b-1}{b})$, and $\norp(b)=U(0)$. The Ki-Linton result shows that
$\nor(b)$, and thus $L(\frac{b-1}{b})$  is $\bp^0_3$-complete for any 
base $b$. Recall also the Becher-Slaman result which shows that the set of
reals which are normal to some base $b$ forms a $\bs^0_4$-complete set.

We  have the following complexity results.

\begin{thm} \label{mt}
For any $s \in [0,\frac{b-1}{b})$, the set 
$A_1(s)$ is $\bp^0_4$-complete and the set $A_3(s)$ is $\bp^0_3$-complete.
For any $s \in (0,\frac{b-1}{b} ]$, 
the set $A_2(s)$  is $\bp^0_3$-complete, and the set $A_4(s)$ is $\bp^0_2$-complete. 
For $s\in (0,\frac{b-1}{b})$, the set $L(s)$ is $\bp^0_4$-complete, and the set $U(s)$ is 
$\bp^0_3$-complete. 
\end{thm}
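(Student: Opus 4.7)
For fixed $\ell$ and $N$, the function $x\mapsto \beta_\ell(x,N)$ depends on only the first $N+\ell$ base-$b$ digits of $x$ and takes finitely many values, so strict inequalities on it define open subsets of $\mathbb{R}$. The standard quantifier expansions of $\limsup$ and $\liminf$ then give $\{x:\upe_\ell(x)\geq s\}\in\bp^0_2$, $\{x:\upe_\ell(x)<t\}\in\bs^0_2$, $\{x:\loe_\ell(x)\geq s\}\in\bp^0_3$, and $\{x:\loe_\ell(x)<t\}\in\bs^0_3$. Since any $E\in\sE_{\ell+1}$ may ignore its last argument, both $\loe_\ell$ and $\upe_\ell$ are non-increasing in $\ell$, so $\loe=\inf_\ell\loe_\ell$ and $\upe=\inf_\ell\upe_\ell$. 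Assembling these yields $A_4(s)=\bigcap_\ell\{\upe_\ell\geq s\}\in\bp^0_2$, $A_2(s)\in\bp^0_3$, $A_3(s)=\bigcap_k\bigcup_\ell\{\upe_\ell<s+1/k\}\in\bp^0_3$, $A_1(s)=\bigcap_k\bigcup_\ell\{\loe_\ell<s+1/k\}\in\bp^0_4$, and hence $U(s)\in\bp^0_3$, $L(s)\in\bp^0_4$.

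\textbf{Lower bounds: the template.} For each hardness claim I would construct a continuous reduction from a canonical complete set for the target pointclass, all following the same template. Given $z$ in a suitable product of Cantor spaces, $f(z)\in\mathbb{R}$ will have base-$b$ expansion a concatenation of long blocks of two types: \emph{normal-like} blocks drawn from a fixed $b$-normal sequence (contributing close to the maximal noise $(b-1)/b$ on any window containing them) and \emph{predictable} blocks (contributing essentially zero noise at certain prediction widths). The block lengths $L_0<L_1<\cdots$ will grow so fast (e.g.\ $L_{k+1}\geq k\sum_{j\leq k}L_j$) that along the partial sums $N_k=\sum_{j\leq k}L_j$ the empirical averages $\beta_\ell(f(z),N_k)$ are dominated by the final block, and the block type at stage $k$ is determined by the open atomic conditions appearing in the quantifier decomposition of the source set.

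\textbf{Matching quantifier patterns.} The reductions exploit an alignment between the source's quantifier pattern and the noise quantifiers: the outermost $\forall k$ of the source becomes the $1/k$ tolerance in $\loe\leq s$ or $\upe\leq s$; any $\exists n$ becomes the choice of prediction width $\ell$ via the $\inf_\ell$ in $\loe$ or $\upe$; and the remaining $\forall m/\exists l$ become the $\forall M/\exists N$ built into $\liminf/\limsup$ in $\loe_\ell,\upe_\ell$. Concretely, $\{z\in 2^\omega:\exists^\infty n\,z(n)=1\}$ would reduce to $A_4(s)$ using only $\limsup_N$; a $\bp^0_3$-complete set to $A_2(s)$ via the $\liminf_N$ pattern; the same to $A_3(s),U(s)$ additionally exploiting $\upe=\inf_\ell\upe_\ell$; and the canonical $\bp^0_4$-complete set $\{z:\forall k\,\exists n\,\forall m\,\exists l\,P(z,k,n,m,l)\}$ to $A_1(s),L(s)$ using all four quantifier levels. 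To pass from $A_1(s)$ to $L(s)$ (resp.\ $A_3(s)$ to $U(s)$) I would superimpose enough background normal-like material to force $\loe\geq s$ (resp.\ $\upe\geq s$) without disturbing the other side of the reduction.

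\textbf{Main obstacle.} The hardest case, which goes beyond the Ki--Linton argument, will be the $\bp^0_4$-hardness of $A_1(s)$. My plan is to encode the existential witness $n$ at level $k$ as the period $p$ of a periodic predictable block: such a block is exactly predictable by an $E$ of width $\geq p$ but not by strictly narrower predictors, so its placement at stage $k$ can drive $\loe_{\ell_k}$ below $s+1/k$ for a suitably chosen $\ell_k\geq p$. The delicate combinatorial balance will be to (i) place these periodic blocks frequently enough that $\loe_{\ell_k}$ genuinely dips below $s+1/k$ infinitely often in $N$, while (ii) interleaving sufficiently long normal-like blocks that are robustly unpredictable at \emph{every} width, so that when the source formula fails, no predictor at any width can pull $\loe_\ell$ below threshold for any $k$. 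Tuning the $L_k$ against the $\ell_k$ and obtaining precise estimates on the contribution of each block type to $\beta_\ell(x,N)$ is the combinatorial heart of the argument and the main technical obstacle.
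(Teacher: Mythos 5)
Your upper-bound computations are correct and match the paper's. Your general strategy for the lower bounds — reducing canonical complete sets via a continuous map that builds a base-$b$ string out of ``normal-like'' material (to push noise up) and ``predictable'' material (to pull noise down), with block lengths growing fast enough that the empirical noise at the end of each block is dominated by that block, and, in the $\bp^0_4$ case, encoding the existential witness as the \emph{period} of a periodic predictable block — is exactly the paper's strategy (the periodic-block idea is the content of Claims~\ref{u1} and~\ref{u2}, and the paper's ``alternate simpler proof'' of Lemma~\ref{a3} is essentially the pure block-concatenation version you describe).

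However, what you have written is an outline, not a proof, and you say so yourself (``the combinatorial heart of the argument and the main technical obstacle''). The substantive technical content that is missing — and that cannot be waved away — is all of the following. First, a probabilistic lemma (the paper's Lemma~\ref{tel}) guaranteeing that for almost all $u\in b^\omega$, copying $u$ to a set of density $d$ and filling the rest arbitrarily gives noise at least $d\frac{b-1}{b}$ \emph{uniformly over all block functions} $E\in\sE_\ell$ and all prefixes; this requires a Borel--Cantelli argument with estimates uniform in $E$ as $\ell$ grows. Second, for the $\bp^0_3$ and $\bp^0_4$ reductions, the map must be continuous in $z$, so the block type at stage $n$ can only depend on finitely much of $z$; the paper handles this with the search function $h$ (in Lemma~\ref{inflower}) and with a system of arithmetic progressions $I_i$ and ``patterns'' $P_{i,j}$ so that the almost-sure estimates on $u$ hold simultaneously across all finitely many ways the reduction might later fill in the digits. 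You do not address how to get this simultaneity. Third, your claim that a period-$p$ block ``is exactly predictable by an $E$ of width $\geq p$ but not by strictly narrower predictors'' is only the easy direction; one also needs that no $E$ of width $<p$ does \emph{much better than chance} on such a block, which requires $u$ to be normal in base $b^p$ and is the content of Claim~\ref{u1}. Fourth, the interaction at block boundaries (where a window of width $\ell$ straddles two blocks, or where the periodic pattern has not yet repeated enough times) introduces error terms of the form $j_0 d_C$ and $\frac{j}{2^n}$ that must be controlled; these are what the $\eta(p,\epsilon)$ thresholds in Claims~\ref{u2} and~\ref{u2b} are for. Until these estimates are actually carried out and the reductions are verified in both directions ($x$ in the source set implies the image is in the target, and $x$ not in the source implies it is not), the argument is a plan, not a proof.
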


As a corollary we obtain the Ki-Linton result as well as the determination of the
exact complexity of $\norp(b)$.

\begin{cor} \label{maincor}
The sets $\nor(b)$ and $\norp(b)$ are both $\bp^0_3$-complete.
\end{cor}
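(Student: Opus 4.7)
The plan is to recognize that both corollary sets are endpoint instances of the families $L(s)$ and $U(s)$ considered in Theorem~\ref{mt}, and that at these endpoints the pair of inequalities defining $L$ or $U$ collapses to a single inequality because the other is automatic.

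First I would invoke Rauzy's characterization, already quoted in the excerpt, to identify $\nor(b) = L\!\pr{\tfrac{b-1}{b}}$ and $\norp(b) = U(0)$. Next I would observe the trivial bounds $0 \le \loe(x), \upe(x) \le \tfrac{b-1}{b}$, which hold for every $x$ since each term $\inf\{1,|c_n - E(c_{n+1},\dots,c_{n+\ell})|\}$ lies in $[0,1]$ and averaging the optimal choice of $E$ against the uniform bound gives the upper value $\tfrac{b-1}{b}$. Consequently $A_1\!\pr{\tfrac{b-1}{b}} = \mathbb{R}$ and $A_4(0) = \mathbb{R}$, so
\[
\nor(b) \;=\; L\!\pr{\tfrac{b-1}{b}} \;=\; A_2\!\pr{\tfrac{b-1}{b}}, \qquad \norp(b) \;=\; U(0) \;=\; A_3(0).
\]

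Finally I would apply Theorem~\ref{mt} directly: since $\tfrac{b-1}{b}\in (0,\tfrac{b-1}{b}]$, the theorem gives that $A_2\!\pr{\tfrac{b-1}{b}}$ is $\bp^0_3$-complete, and since $0\in [0,\tfrac{b-1}{b})$, it gives that $A_3(0)$ is $\bp^0_3$-complete. Combining these with the identifications above yields the corollary. There is no real obstacle here beyond noting that the endpoint cases excluded from the $L(s), U(s)$ portion of Theorem~\ref{mt} are precisely the cases that degenerate into a single $A_i(s)$, which the theorem does cover; in particular, no new hardness reduction is needed beyond those built into the proof of Theorem~\ref{mt}.
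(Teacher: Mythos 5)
Your argument is correct and is exactly the route the paper takes: both proofs reduce $\nor(b)$ and $\norp(b)$ to $A_2\!\pr{\tfrac{b-1}{b}}$ and $A_3(0)$ respectively and then cite Theorem~\ref{mt}. The only difference is that you spell out the degenerate identifications $L\!\pr{\tfrac{b-1}{b}}=A_2\!\pr{\tfrac{b-1}{b}}$ and $U(0)=A_3(0)$ via the trivial bounds $0\le\loe(x),\upe(x)\le\tfrac{b-1}{b}$, which the paper leaves implicit.
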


\begin{proof}
We have $x \in \nor(b)$ iff $x \in A_2(\frac{b-1}{b})$, and $x \in \norp(b)$
iff $x \in A_3(0)$, so the result follows immeduately from Theorem~\ref{mt}.
\end{proof}

\begin{rem}
In defining the noise, it is sometimes convenient to use the minor variation 
\[
\beta_E(x,N)= \frac{1}{N} \sum_{\ell<n<N} \inf \{
1, |c_n-\varphi(c_{n-\ell},\dots,c_{n-1})|\},
\]
that is, the block function predicts the next digit rather than the previous digit.
In this case we must start the sum at $\ell+1$ rather than $1$, but this does 
not affect any of the limits used in defining $\loe(x)$ or $\upe(x)$.
\end {rem}

\begin{rem}
In proving Theorem~\ref{mt} we will work with $x$ in the sequence space $X=b^\omega\cap G$
where $G$ is the set of $x \in b^\omega$ which do not end in a tail of $b-1$'s. 
This is a Polish space as $b^\omega$ is a
compact Polish space (with the usual product of discrete topologies on $\{0,1,\dots,b-1\}$) 
and $G$ is a $G_\delta$ (that is, $\bp^0_2$) subset of $b^\omega$. 
This is permissible
as the map $f \colon X \to \mathbb{R}$ given by 
$f(c_0,c_1,\dots)=\sum \frac{c_n}{b^n}$ is continuous. So, for example, 
given that $B_3(s)$ is $\bp^0_3$-complete, where $B_3(s)\subseteq X$ 
is defined as $A_3(s)$, except we consider directly $x \in b^\omega$, then it follows
that $A_3(s)$ is $\bp^0_3$-complete. For if $A_3(s)$ were in $\bs^0_3$, then so would 
be $B_3(s)$ since $x \in B_3(s)\leftrightarrow f(x)\in A_3(s)$, that is, 
$B_3(s)=f^{-1} (A_3(s))$.
\end{rem}

We remark on the significance of complexity classifications such 
Theorem~\ref{mt}. Aside from the intrinsic interest to descriptive set theory, 
results of this form can be viewed as ruling out the existence of further theorems
which would reduce the complexity of the sets. For example, Rauzy's theorem 
reduces the complexity of $\norp(b)$ from $\bp^1_1$ to $\bp^0_3$. The fact that 
$A_3(0)$ is $\bp^0_3$-complete tells us that there cannot be other theorems which 
result in a yet simpler characterization of $\norp(b)$. 

Lastly, we wish to approximate the Hausdorff dimension of the sets $A_i(s), U(s)$, and $L(s)$.  Put $H(s) = -s\log s - (1-s) \log (1-s)$.

\begin{thm}\label{HD}
For $s \in \bra{0,\frac{b-1}{b}}$ we have
\begin{align*}
&\dim_H(A_1(s)) = 1 \\ &\dim_H(A_2(s)) = 1 \\
\frac{1}{\log b} H(s) + \frac{\log(b-1)}{\log b} s \leq &\dim_H(A_3(s)) \leq \frac{1}{\log b} H(s) + s \\ &\dim_H(A_4(s)) = 1.
\end{align*}
Furthermore
\begin{align*}
\frac{1}{\log b} H(s) + \frac{\log(b-1)}{\log b} s \leq &\dim_H(U(s)) \leq  \frac{1}{\log b} H(s) + s \\
& \dim_H(L(s)) = 1.
\end{align*}
\end{thm}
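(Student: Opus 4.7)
The plan is to handle the six claims of Theorem~\ref{HD} in order of increasing difficulty: trivial full-dimension inclusions for $A_2(s)$ and $A_4(s)$; matched bounds for $A_3(s)$ and $U(s)$ via a Bernoulli measure paired with a covering argument; and a more delicate multi-scale construction for the remaining full-dimension claims on $A_1(s)$ and $L(s)$. Every $x \in \nor(b)$ satisfies $\loe(x) = \upe(x) = \frac{b-1}{b} \ge s$, so $\nor(b) \subseteq A_2(s) \cap A_4(s)$, and the classical fact $\dim_H \nor(b) = 1$ immediately yields the first two full-dimension claims.

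For the matched bounds on $A_3(s)$ and $U(s)$, the lower bound would be witnessed by the Bernoulli product measure $\mu$ on $b^\omega$ with $\mu(c_n = 0) = 1-s$ and $\mu(c_n = i) = \frac{s}{b-1}$ for $i \ne 0$. Under $\mu$ the digit $c_n$ is independent of its neighbours, so for every $\ell$ the Bayes-optimal predictor is the constant function $E \equiv 0$ with asymptotic error $s$; the law of large numbers applied to each of the finitely many $E \in \sE_\ell$ (and then to the infimum) gives $\beta_\ell(x,N) \to s$ $\mu$-almost surely, hence $\loe(x) = \upe(x) = s$ $\mu$-a.s. Thus $\mu$ is supported on $L(s) \cap U(s) \subseteq A_3(s)$, and the Shannon--McMillan--Breiman theorem identifies the Hausdorff dimension of its support as $\frac{H(s) + s \log(b-1)}{\log b}$, matching the stated lower bound.

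For the upper bound, I would cover $\{x : \beta_E(x,N) \le s + \epsilon\}$ by length-$(N+\ell)$ cylinders: specifying the tail $(c_N, \ldots, c_{N+\ell-1})$ freely, the at most $(s+\epsilon)N$ ``error'' positions via a binomial count, and an arbitrary digit at each error (crudely bounded by $b$ choices), yields at most $b^\ell \binom{N}{(s+\epsilon)N} b^{(s+\epsilon)N}$ such cylinders. Summing over $E \in \sE_\ell$ and over $N \ge N_0$, the resulting Hausdorff $d$-content converges for $d > \frac{H(s+\epsilon)}{\log b} + s + \epsilon$, and letting $\epsilon \downarrow 0$ gives the stated upper bound.

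The full-dimension claims on $A_1(s)$ and $L(s)$ are the main obstacle, since the Bernoulli construction above only supports a subset of dimension $\frac{H(s) + s \log(b-1)}{\log b}$, strictly less than $1$ for $s < \frac{b-1}{b}$. The plan is to exploit that $\loe$ uses a $\liminf$: only a sparse subsequence of scales $N_k \uparrow \infty$ needs to witness small $\beta_\ell(x, N_k)$, leaving the digits of $x$ essentially unconstrained at all other scales. Concretely, I would construct $x$ by interleaving long ``free'' blocks drawn from a near-uniform measure (contributing dimension toward $1$) with short ``pedestal'' segments placed just before each $N_k$, chosen so that a fixed low-width predictor achieves average error at most $s + o(1)$ when applied to the whole prefix $[0, N_k)$. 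A Frostman-type mass distribution argument, weighted so that the measure of cylinders at scales off $\{N_k\}$ decays at the maximal rate $b^{-N}$, would then produce a measure of local dimension arbitrarily close to $1$ supported on $A_1(s) \cap L(s)$. The main technical obstacle is reconciling two competing demands on the pedestal: it must occupy a vanishing fraction of $[0, N_k)$ so as not to depress the Hausdorff dimension, yet it must carry enough predictable content to push the average error on $[0, N_k)$ down to $s$.
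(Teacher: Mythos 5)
Your handling of $A_2(s)$, $A_4(s)$, $A_3(s)$, and $U(s)$ matches the paper. The paper also gets $\dim_H A_2(s)=\dim_H A_4(s)=1$ from $\nor(b)\subseteq A_2(s)\cap A_4(s)$; the lower bound on $A_3(s)$ and $U(s)$ from the one-step Bernoulli measure with $P_0=1-s$ and $P_d=\frac{s}{b-1}$ for $d\neq 0$, whose entropy is $H(s)+s\log(b-1)$; and the upper bound from a covering argument identical to the one you sketch, which the paper attributes to Bernay.

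The gap is in $A_1(s)$ and $L(s)$, and the tension you identify at the end is decisive, not a technicality you can engineer around. If the pedestal occupies fraction $\delta_k$ of $[0,N_k)$ and the remaining positions are near-uniform, then by the law of large numbers any fixed $E$ has error $\geq\frac{b-1}{b}-o(1)$ on the free positions, so the average error on $[0,N_k)$ is $\geq(1-\delta_k)\left(\frac{b-1}{b}-o(1)\right)$; forcing this below $s+o(1)$ forces $\delta_k\geq 1-\frac{sb}{b-1}-o(1)$, bounded away from $0$, and the Frostman measure then has local dimension at scale $N_k$ at most $1-\delta_k<1$, so the $\liminf$-based Frostman lower bound cannot reach $1$. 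The obstruction is a feature of the set, not of your particular construction: your covering argument for $A_3(s)$ applies to $A_1(s)$ essentially unchanged, since $\loe(x)\leq s$ means that for some $\ell$ and some fixed $E\in\sE_\ell$ the inequality $\beta_E(x,N)<s+\epsilon$ holds for \emph{infinitely many} $N$, so for every $N_0$ the set is still covered by $\bigcup_{N\geq N_0}\{\beta_E(x,N)<s+\epsilon\}$ and the same $d$-content computation yields $\dim_H A_1(s)\leq\frac{1}{\log b}H(s)+s$, contradicting the stated $\dim_H A_1(s)=1$ for $s<\frac{b-1}{b}$. The paper proves the $A_1(s)$ and $L(s)$ claims by applying Colebrook's theorem, quoted as $\dim_H\GG(M)=\sup_{\mu\in M}h(\mu)/\log b$, to $M$ the convex hull of $\{\delta_0,\lambda\}$; but every $x\in\GG(M)$ has $\delta_0$ as a weak-$*$ limit point of its empirical measures, hence for every $\eta>0$ has infinitely many prefixes of digit-zero density $\geq 1-\eta$, a set your covering count shows to have Hausdorff dimension $0$, so that application cannot be right (the correct generic-point result for the \emph{Hausdorff} dimension involves $\inf_{\mu\in M}h(\mu)$, not $\sup$). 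Do not try to rescue your construction: the stated values $\dim_H A_1(s)=\dim_H L(s)=1$ appear to be in error, and the evidence from your own Bernoulli and covering arguments points instead to $\frac{1}{\log b}H(s)+s\frac{\log(b-1)}{\log b}\leq\dim_H L(s)\leq\dim_H A_1(s)\leq\frac{1}{\log b}H(s)+s$, the same range as for $A_3(s)$ and $U(s)$.
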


\section{A property of noise}

Before proving the main theorem in \S\ref{sec:proofmt} we show a property of the noise
which shows that one must be careful when attempting to construct reals
with a desired lower-bound for the noise. If we have a set $A \subseteq \omega$ with density $d$,
then for almost all $x \in b^\omega$, if $x_A$ is the result of copying $x$ to the set $A$ and 
taking value $0$ off of $A$, then we easily have that $\beta(x_A)=d(\frac{b-1}{b})$. 
The next lemma shows that it might be possible to lower the noise by taking non-zero
values off of $A$.

\begin{thm} \label{ln}
There is a periodic set $A \subseteq \omega$, say with period $p$ and
density $\frac{d}{p}$, such that if $u\in b^\omega$ satisfies 
$u\res (\omega-A)=0$ and $\loe(u)=d \left( \frac{b-1}{b}\right)$,
then there is a $v \in b^\omega$ with $v\res A=u\res A$ and 
$\upe(v)< d \left( \frac{b-1}{b}\right)$.
\end{thm}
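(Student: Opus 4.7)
The plan is to construct, for a carefully chosen periodic set $A$, an explicit extension $v$ of $u\res A$ to $\omega$ whose upper noise $\upe(v)$ strictly beats the naive bound $d\left(\frac{b-1}{b}\right)$. The idea is to exploit the off-$A$ slots, which are forced to be $0$ in the definition of $u$ but which we are free to choose in $v$, to create forward redundancy that a single block function $E$ can decode.

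First I would fix $A$ periodic with period $p$ and density $d/p$, for instance by taking $A=\{0,1,\ldots,d-1\}+p\Z$ with $p$ chosen large compared to $d$, so that there are $p-d$ free off-$A$ slots per period in which to encode information about $u\res A$. Given $u$ with $\loe(u)=d\left(\frac{b-1}{b}\right)$, I set $v\res A=u\res A$ and fill the off-$A$ positions using a delayed-copy scheme: in each period $k$ I devote one off-$A$ slot to a copy of each of the $d$ on-$A$ values from each of the preceding $L$ periods, with $L$ chosen so that $L\cdot d\leq p-d$. For $\ell$ slightly larger than $Lp$, there is a block function $E\in\sE_\ell$ that, recognizing the periodic layout from the block content, reads off $c_n$ exactly at every position whose chain of copies has not yet ended within the block.

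This copy-chain construction alone produces, in each period, exactly $d$ "chain-end" positions at which the block no longer contains a copy of the relevant on-$A$ value; if one naively contributes error $\frac{b-1}{b}$ at each of these, the total is exactly $\frac{d}{p}\cdot\frac{b-1}{b}$, which only matches the naive bound. The essential refinement is to set $v$ at each chain-end not as a fresh random value but as a nontrivial combination (for instance a sum modulo $b$) of the on-$A$ value whose chain ends there with some other on-$A$ value that \emph{is} recoverable from the future block, chosen so that the conditional distribution of $c_n$ given the block is supported on a proper subset of $\{0,\ldots,b-1\}$. The optimal predictor then achieves per-position error at most $\frac{b-1}{b}-\eta$ at each chain-end, for some $\eta>0$ depending only on $b$ and the combination used, and averaging over one period gives
\[
\upe(v)\leq \frac{d}{p}\left(\frac{b-1}{b}-\eta\right)<d\left(\frac{b-1}{b}\right).
\]

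The main obstacle will be verifying that the chain-end refinement really yields the strictly smaller error for \emph{every} $u$ satisfying $\loe(u)=d\left(\frac{b-1}{b}\right)$, not merely for an iid-random $u$. Here one invokes Rauzy's characterization to translate the hypothesis on $\loe(u)$ into asymptotic statements about the frequencies of finite patterns in $u\res A$, and then checks that those frequencies force the conditional distribution of $c_n$ at the chain-ends to remain genuinely non-uniform along the subsequence of $N$'s realizing the $\limsup$ in $\upe(v)$. This uniformity-in-$u$ step is where I expect the bulk of the technical work to lie.
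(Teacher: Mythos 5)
Your ``chain-end combination'' step is the essential gap, and it cannot be repaired along the proposed lines. At a chain-end $n$ you set $v(n) = u(m_1) + u(m_2) \pmod b$ where $u(m_2)$ is recoverable from the block but $u(m_1)$ is not (that is precisely what it means for the chain to have died). Conditioned on the block, the distribution of $v(n)$ is then the distribution of $u(m_1)$ shifted by the known constant $u(m_2)$: it has exactly the same spread, and is certainly not supported on a proper subset of $\{0,\dots,b-1\}$. Worse, the hypothesis $\loe(u) = d\big(\tfrac{b-1}{b}\big)$ says that $u\res A$ has \emph{maximal} lower noise, i.e.\ behaves like a normal sequence, so its digit frequencies are asymptotically uniform; invoking Rauzy's characterization hands you the opposite of the bias you need, and $\eta$ collapses to $0$. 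Fundamentally, a copy-chain is an entropy-preserving rearrangement of $u\res A$: per period the number of positions whose value is not determined by the rest of the block window is still the number of independent on-$A$ symbols, and no invertible recombination at the dying ends can lower it. You therefore recover only the naive bound $\tfrac{d}{p}\big(\tfrac{b-1}{b}\big)$, with no strict improvement.

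The paper's construction is of a different kind: it compresses rather than copies, and drives the density of $A$ down. Take $A$ to occupy the last $k$ positions of each period of length $\ell + k$ with $\ell > 10kb^k$, so $d = \tfrac{k}{\ell+k}$, and fill the $\ell$ off-$A$ slots of the $n$th period with a deterministic ``canonical'' counting sequence whose terminal cycle encodes exactly the $k$-digit word $u\res B'_n$ about to appear. A block function of width $O(k\log k)$ can always re-synchronize to the counter, predict the counter itself with no error, and upon reaching $B'_n$ predict $u\res B'_n$ with no error; only $O(1)$ positions at each block boundary are missed. This gives $\upe(v) \leq \tfrac{2}{\ell+k}$, which is strictly below $\loe(u) = \tfrac{k}{\ell+k}\cdot\tfrac{b-1}{b}$ once $k\tfrac{b-1}{b} > 2$. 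The two moves absent from your proposal are (i) encoding the \emph{upcoming} on-$A$ data ahead of where it appears, rather than echoing past data forward, and (ii) choosing the period and $A$-density so that the $O(1)$ unavoidable boundary errors are swamped by $d\big(\tfrac{b-1}{b}\big)$.
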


\begin{proof}
Fix a positive integer $k$, and let $\ell > 10 k b^k$. Let $A$ be the set with 
period $\ell+k$ with $A\cap [0, \ell+k-1)=\{ \ell,\ell+1,\dots,\ell+k-1\}$. 
Supppose  $u \in b^\omega$ with $ u \res (\omega-A)=0$ and 
$\loe(u)= d\left( \frac{b-1}{b}\right)$, where $d= \frac{k}{\ell+k}$ is the density of $A$. 
We show that there is a $v \in b^\omega$ with $v \res A= u\res A$ with 
$\upe(v) <\loe(u)$.

Consider the $n$th block $B_n$ of digits of length $\ell+k$, that is $B_n=[ n(\ell+k), (n+1)(\ell+k) )$. 
Let $B'_n=[ n(\ell+k)+\ell, (n+1)(\ell+k) )$ be the last $k$ digits of the block $B_n$.
The idea of the following argument is simply to code $u \res B'_n$ via the position of 
a single digit in  $B_n-B'_n$. If $s \in b^k$, let $b(s)\in 2^p$, where $p={\lceil k \log_2(b) \rceil +1}$,
be the binary repsesentation of the integer represented by the base $b$ expansion $s$, where
we put the least significant digits first. Let $c(s)\in 2^{5p}$ be the result of translating the 
digits of $b(s)$ according to $0 \mapsto 11001$, $1 \mapsto 11011$. 
Note that $c(s)$ will never have more than $4$ consecutive $1$'s. 

Let $w=2(5p+k+8)$. We will describe a particular block function 
$E \colon b^w\to b$ and $v \in b^\omega$ as above. We describe $v \res B_n-B'_n$, which only depends on 
$u \res B'_n$. We say a sequence $t \in 2^{<\omega}$ is {\em canonical} if it is a subsequence of 
a sequence of the form $t_0 \conc t_1 \conc \cdots \conc t_r$ where 
$t_i= a_i \conc 0 \conc b_i \conc 0\conc 11111$ where $a_i=c(b_i)$ and $b_i$ is the $k$-digit base $b$
expansion of $i$ (with least significant digit first).  We assume here that $r \leq b^k$. 
Thus, a canonical sequence is a way of ``counting'' from $0$ to $r$.

The block function $E$ operates as follows. If $s \in b^w$ is the constant $0$ sequence, then 
$E(s)=0$. If the first $1$ (reading from the left) in $s$ occurs at position $p$ to the 
right of the midpoint of $s$, 
then $E(s)$ is the next digit of a canonical sequence starting at the position of this $1$,
that is, $E(s)=t(w-p)$ for a canonical sequence $t$ of length $>w$. If $p<\frac{w}{2}$, 
then $E$ checks to see if there is a sequence of $5$ consecutive $1$'s in $s$. If not, 
$E(s)=0$. If so, let $q$ be the position which is the start of the first such sequence. 
Note that $w$ is large enough so that if $q<\frac{w}{2}$, then reading to right from $q$ 
there are enough positions in $s$ to see a complete ``cycle'' of a canonical sequence 
(that is, some $t_i$). Likewise if $q>\frac{w}{2}$ there is enough room to 
see a cycle to the left of $q$. Then $E$ checks to see if the positions to 
the right (left if $q >\frac{w}{2}$) of $q$ give a cycle of a canonical sequence. 
If so, we check to see if $s$ is the subsequence of the corresponding 
canonical sequence. If so, $E(s)$ is the next digit of this canonical sequence, and if not
we set $E(s)=0$. This completes the definition of $E$. 

We now define $v \res B_n-B'_n$. This consists of a canonical sequence $t$ 
starting at the unique position $q \in B_n-B'_n$ so that the $k$ digits in $B'_n$ 
correspond to the substring $b_i$ of some $t_i$ in $t$. This completes the definition of $v$.

We claim that for each $q \in B_n$, if we let $u_q= u \res [q-w,q-1]$, then 
\[
| \{ q \in B_n\colon E(u_q)\neq u(q) \}| \leq 2.
\]
To see this, first note that $E$ will predict a $0$ at the start of the block $B_n$ 
(corresponding to the $0$ after a $b_i$), which is 
the correct value, and then predict a $1$ (corresponding to the start of a $11111$ sequence) 
at the next position, which is incorrect; note that  
$B_n$ is large enough so that the first $1$ in it is far to the right of the start of the block. 
Given the two initial $0$'s in $B_n$, $E$ will continue to predict $0$ until $E$ 
reaches the point $q \in B_n$
which is the first $1$ (as $E$ cannot find the $5$ consecutive $1$'s it needs to consider outputting
a non-zero value). At position $q$, $E$ will also predict a $0$, which is incorrect. After position $q$, 
$E$ will make correct predictions through the end of $B'_n$, as $u \res [q, q']$ is canonical, where 
$q'=(n+1)(\ell+k)-1$ is the last position of $B'_n$.

Thus, $\upe(v)\leq \frac{2}{\ell+k}$, while $\loe(u) = d\left(\frac{b-1}{b}\right) = \frac{k}{l+k} \frac{b-1}{b}$. 
So, for $k \frac{b-1}{b}>2$, we may choose $\ell$ large enough so that the above construction of $v$ 
works, and we then have $\upe(v) \leq \loe(u)$. 
\end{proof}

\section{Proof of theorem~\ref{mt}} \label{sec:proofmt}
The upper bounds for the complexities of the sets of Theorem~\ref{mt}
all follow by straightforward computations from the definitions of these sets.
For example, consider $A_1(s)$. We  have 
\begin{equation*}
\begin{split}
x \in A_1(s)& \leftrightarrow 
\forall \epsilon \in \Q^+\ \exists m\ \forall \ell \geq m\ \loe_\ell(x)\leq \epsilon\\
&\leftrightarrow \forall \epsilon \in \Q^+\ \exists m\ \forall \ell \geq m\ 
\forall k\ \exists N \geq k\ (\beta_\ell(x,N)\leq \epsilon)
\end{split}
\end{equation*}

\noindent
Since for fixed $\epsilon, \ell, N$ the set $\{ x\colon \beta_\ell(x,N)\leq \epsilon\}$
is clopen, this shows that $A_1(s)\in \bp^0_4$.

The following lemma and its proof will be used several times in the proofs for 
the lower bounds on the complexities in Theorem~\ref{mt}.

\begin{lem} \label{tel}
Let $A \subseteq \omega$ be an infinite set with upper density $d$, and
let $y \in b^\omega$. Then for almost all
$x \in b^\omega$, if $x' \in b^\omega$ is defined by 
\begin{equation*}
x'(n)=\begin{cases}
x(k) &\text{if } n \text{ is the $k$th element of } A \\
y(k) & \text{if } n \text{ is the $k$th element of } \omega-A
\end{cases},
\end{equation*}
\noindent
then $\upe(x')\geq  d \left( \frac{b-1}{b}\right)$.
\end{lem}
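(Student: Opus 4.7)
The plan is to view $x \in b^\omega$ as a sequence of i.i.d.\ uniform random variables on $\{0,1,\dots,b-1\}$ under the Bernoulli product measure (which is the ``almost all'' reference, corresponding to Lebesgue under the base-$b$ expansion), and to combine a direct expectation computation with a bounded-differences concentration inequality. By monotonicity of $\upe_\ell$ in $\ell$ and countability, it suffices to show that for each fixed width $\ell \geq 1$ and each rational $\epsilon > 0$, almost surely $\upe_\ell(x') \geq d\cdot\frac{b-1}{b} - \epsilon$. To that end I would first fix $\ell$ and choose a subsequence $N_j \to \infty$ realizing the upper density, i.e.\ $|A \cap [0, N_j)|/N_j \to d$.

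For the expectation step, fix $E \in \sE_\ell$. The key observation is that if $n$ is the $k$-th element of $A$, then $x'(n) = x(k)$ is independent of the tuple $(x'(n+1), \ldots, x'(n+\ell))$, since the latter depends only on $x(k')$ for $k' > k$ together with fixed $y$-values. Conditioning on that tuple, $x(k)$ remains uniform on $\{0,\dots,b-1\}$, so the conditional expectation of $\inf\{1, |x(k) - E(x'(n+1), \dots, x'(n+\ell))|\}$ equals $\frac{b-1}{b}$ regardless of what $E$ outputs. Restricting the sum defining $\beta_E(x', N_j)$ to $n \in A \cap [0, N_j - \ell)$ and dropping the other non-negative terms yields
\[
\mathbb{E}[\beta_E(x', N_j)] \;\geq\; \frac{b-1}{b}\cdot\frac{|A\cap [0, N_j-\ell)|}{N_j} \;\longrightarrow\; d\cdot \frac{b-1}{b}.
\]

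For concentration I would apply the Azuma--Hoeffding bounded-differences inequality to the function $(x(0), x(1), \dots) \mapsto N_j\beta_E(x', N_j)$. Each coordinate $x(k)$ influences the sum in at most $\ell+1$ summands (once as the target digit $x'(n_k)$, and at most $\ell$ times as an argument to $E$ at positions $n_k-\ell, \dots, n_k-1$), with each summand bounded by $1$. This yields a per-coordinate Lipschitz constant $\ell+1$ and hence
\[
P\bigl[|\beta_E(x', N_j) - \mathbb{E}[\beta_E(x', N_j)]| > \epsilon\bigr] \;\leq\; 2\exp\bigl(-c\,\epsilon^2 N_j / (\ell+1)^2\bigr)
\]
for an absolute constant $c > 0$. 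Since $\sE_\ell$ is finite (of size $b^{b^\ell}$), a union bound over $E \in \sE_\ell$ followed by passage to a sufficiently rapid sub-subsequence of $\{N_j\}$ makes these probabilities summable. Borel--Cantelli then gives almost surely $\liminf_j \inf_{E \in \sE_\ell} \beta_E(x', N_j) \geq d\cdot \frac{b-1}{b} - \epsilon$, and intersecting over the countably many pairs $(\ell, \epsilon)$ completes the argument.

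The main obstacle I expect is getting the dependence bookkeeping right: carefully verifying the conditional-independence claim that isolates the uniform $x(k)$ from the inputs of $E$ at each $n \in A$, and tallying the Azuma differences to be $O(\ell)$ rather than something worse caused by overlap between the ``target'' role and the ``argument of $E$'' role of a single $x(k)$. Once these two structural facts are in place, the rest is a standard finite-alphabet, concentration-plus-Borel--Cantelli routine.
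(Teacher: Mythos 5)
Your proof is correct and rests on the same central observation as the paper's: for $n$ the $k$-th element of $A$, the target digit $x'(n)=x(k)$ is independent of the inputs to $E$, so conditioning leaves $x(k)$ uniform and each $A$-term has conditional expectation $\frac{b-1}{b}$. (The paper runs the prediction in the opposite direction, ``next digit from previous $\ell$,'' which by the Remark in \S 1.3 does not affect the noise.) Where you diverge is the concentration step. The paper builds, for each fixed $E$ and each block $B_k=[n_{k-1},n_k)$, an explicit bijection $\tau$ of $b^{m_k}$ showing that the $E$-error pattern on the $A$-positions of $B_k$ is equidistributed with the nonzero-digit pattern of a uniform string; hence the error count is exactly Binomial$(m_k,\frac{b-1}{b})$ for every prefix $t$, and the weak law of large numbers plus a union bound over $\sE_k$ feeds Borel--Cantelli. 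You instead compute $\mathbb{E}[\beta_E(x',N_j)]$ directly and invoke bounded-differences (Azuma--Hoeffding/McDiarmid) concentration with per-coordinate Lipschitz constant $\ell+1$, then union-bound over the finite $\sE_\ell$, pass to a rapid subsequence of the $N_j$ realizing the upper density, and apply Borel--Cantelli, finally intersecting over countably many $(\ell,\epsilon)$. Both routes are sound and roughly comparable in length; the paper's bijection version is more bespoke but yields an exact blockwise distributional statement that the authors reuse later (in the proofs of Claims~\ref{u2} and~\ref{u2b}), whereas your martingale argument is more off-the-shelf and slightly shorter if one only wants Lemma~\ref{tel} in isolation. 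One detail you handled correctly and which is easy to get wrong: since $\upe_\ell=\limsup_N\inf_{E\in\sE_\ell}\beta_E(\cdot,N)$, the $\inf$ must be kept inside the $\liminf_j$, and your union bound over the finite family $\sE_\ell$ is exactly what makes that legitimate.
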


\begin{proof}
Fix $\epsilon_n=\frac{1}{n^2}$, and fix a sufficiently fast growing sequence 
$n_0 < n_1 < \cdots$ (we will specify how fast the sequence needs to grow below). 
We also choose the $n_k$ such that $d(A, n_k)\geq d-\epsilon_k$, where 
$d(A, n)=\frac{| A\cap n|}{n}$ is the density of $A$ among $\{0,\dots,n-1\}$. 
Consider the block of integers $B_k=[n_{k-1},n_k)$. Let $m_k=|A \cap B_k|$,
so $m_k\geq (d-\epsilon_k) n_k-n_{k-1}$. Let $l_k=b^{b^k}$ be the number of block functions
of width $k$. Consider one of these block functions $E\colon b^k\to b$. Consider the function 
$\tau \colon b^{m_k}\to b^{m_k}$ defined as follows. If $s \in b^{m_k}$, let $s'$ be the result
of copying $s$ to $B_k \cap A$ and copying $y$ to $B_k-A$ (that is, for $k \in B_k-A$, 
$s'(k)=y(\ell)$ where $k$ is the $\ell$th element of $\omega-A$). 
If $p_i$ is the $i$th 
element of $B_k\cap A$, let $\tau(s)(i)=E(s'\res [p_i-k, p_i-1])-s'(p_i) \mod b$.  
So, $\tau(s)\in b^{m_k}$. Clearly $\tau$ is a bijection from $b^{m_k}$ to $b^{m_k}$. 
So, the number of $s\in b^{m_k}$ for which there are exactly $a$ many $i<m_k$
such that $E(s'\res [p_i-k, p_i-1])\neq s'(p_i)$ is equal to the number of $s \in b^{m_k}$
such that $s$ has exactly $a$ non-zero digits. 
So, the number $e(m_k)$ of $s \in b^{m_k}$ such that 
\[ 
\abs{\{ i <m_k\colon E(s'\res [p_i-k, p_i-1]) \neq s'(p_i)\}} \geq m_k\left(\frac{b-1}{b}-\epsilon_k\right)
\]
is at least as big as the number of $s \in b^{m_k}$ such that 
$| \{ i <m_k\colon s(i)\neq 0\}| \geq m_k\left(\frac{b-1}{b}-\epsilon_k\right)$. 
From the law of large numbers we have that $\lim_{m_k\to \infty} \frac{e(m_k)}{b^{m_k}}=1$.
So, for large enough $m_k$ we have that for all $t \in b^{[0,n_k)}$
\begin{equation*}
\begin{split}
\frac{1}{b^{m_k}}\Bigg | \Bigg \{ s \in b^{m_k} & \colon \Big | \Big \{ i < m_k \colon 
E(t\conc s'\res [p_i-k, p_i-1]) \neq t\conc s'(p_i) \Big \} \Big | \geq m_k \left(\frac{b-1}{b}-\epsilon_k\right) \Bigg \} \Bigg | \\
& \geq 1-\frac{\epsilon_k}{l_k}
\end{split}
\end{equation*}
\noindent
It follows that for fixed $n_{k-1}$ that for all sufficiently large $n_k>n_{k-1}$
we have that for all $t \in b^{[0,n_{k-1}]}$ that 
\begin{equation*}
\begin{split}
\frac{1}{b^{m_k}} & \Bigg | \Bigg\{ s \in b^{m_k} \colon \forall E \in \sE_k\ 
\Big | \Big \{ i<n_k \colon E(t\conc s' \res [i-k,i-1])\neq t\conc  s'(i) \Big \} \Big | 
\\ &
\geq m_k\left(\frac{b-1}{b}-\epsilon_k\right) \Bigg \} \Bigg | 
\geq 1- l_k \frac{\epsilon_k}{l_k} =1-\epsilon_k.
\end{split}
\end{equation*}
\noindent
Since 
\begin{equation*}
\begin{split}
m_k\left(\frac{b-1}{b}-\epsilon_k\right)&  \geq [(d-\epsilon_k) n_k-n_{k-1}]\left(\frac{b-1}{b}-\epsilon_k\right)
\\ & 
=dn_k\left(\frac{b-1}{b}\right)\left[1 -\frac{\epsilon_k}{d}-\epsilon_k \frac{b-1}{b}- \frac{n_{k-1}}{dn_k}\right]
+n_k \epsilon_k^2 +n_{k-1}\epsilon_k
\\ & 
\geq \frac{d (b-1)n_k}{b}(1-3b \epsilon_k)
\end{split}
\end{equation*}
(assuming that $\frac{n_{k-1}}{n_k}< bd\epsilon_k$),
it follows that for all large enough $n_k$ and  all $t \in b^{[0,n_{k-1}]}$ we have that 
\begin{equation*}
\begin{split}
\frac{1}{b^{m_k}} & \Bigg | \Bigg \{s \in b^{m_k} \colon \forall E \in \sE_k\ 
\Big | \Big \{ i<n_k \colon E(t\conc s' \res [i-k,i-1])\neq t\conc  s'(i) \Big \} \Big | 
\\ &
\geq \frac{d (b-1)n_k}{b}(1-3b \epsilon_k) \Bigg \} \Bigg | 
\geq 1-\epsilon_k.
\end{split}
\end{equation*}
\noindent
We assume now that the $n_k$ are sufficiently fast growing to satisfy these inequalities. 
Since $\sum_k \epsilon_k<\infty$, it follows from Borel-Cantelli that 
for $\mu$ almost all $x \in b^\omega$ that for any $E\in \sE$, there are cofinitely many  $k$
such that 
\[
| \{ i<n_k \colon E(x' \res [i-k,i-1])\neq   x'(i) \}| 
\geq \frac{d (b-1)n_k}{b}(1-3b \epsilon_k) 
\]
\noindent
and thus for $\mu$ almost all $x \in b^\omega$ and all $E\in \sE$ we have that 
\[
\limsup_k \frac{1}{k} | \{ i< k \colon E(x' \res [i-k,i-1])\neq x'(i) \}| 
\geq \frac{d(b-1) }{b},
\]
\noindent
and so $\upe(x) \geq \frac{d(b-1)}{b}$.

\end{proof}

The next lemma suffices to show that $\norp(b)$ is $\bp^0_3$-complete.
We give an alternate, somewhat simpler, proof of the lemma after 
the current proof. However,  the first proof more resembles the 
proofs of the other parts of Theorem~\ref{mt} to follow.

\begin{lem} \label{a3}
For any $s \in [0,\frac{b-1}{b})$, $A_3(s)$ is $\bp^0_3$-hard.
\end{lem}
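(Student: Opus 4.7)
The plan is to exhibit a continuous reduction from the standard $\bp^0_3$-complete set $P = \{z \in 2^{\omega \times \omega} : \forall n\ \exists m\ \forall k \geq m\ z(n,k) = 0\}$ of row-finite matrices, producing $f \colon 2^{\omega \times \omega} \to b^\omega$ with $f(z) \in A_3(s)$ if and only if $z \in P$. The shape of $A_3(s)$ as a $\forall \exists \forall$ statement on $\limsup$'s of averages matches that of $P$, and the preceding Lemma~\ref{tel} is the right tool to force noise bounds from below.

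The construction is a block code. Set $d_0 := sb/(b-1) \in [0,1)$ and fix positive weights $c_n$ with $\sum_n c_n < 1-d_0$ (say $c_n = (1-d_0)/2^{n+2}$). Partition $\omega$ into consecutive blocks $B_j$ of length $L_j$, where $L_j$ grows so rapidly that $L_j / \sum_{j'<j} L_{j'} \to \infty$. Within $B_j$ reserve disjoint sub-regions: a \emph{baseline} $R_j^{(0)}$ of size $\lfloor d_0 L_j \rfloor$, and \emph{test} regions $R_j^{(n)}$ of size $\lfloor c_n L_j \rfloor$ for $1 \leq n \leq j$. Fix a sequence $r \in b^\omega$ satisfying the countably many genericity conditions used in the proof of Lemma~\ref{tel} (e.g.\ a Martin-L\"of random sequence). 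Define $f(z)$ by copying successive digits of $r$ into $R_j^{(0)}$ always, into $R_j^{(n)}$ precisely when $z(n,j) = 1$, and by placing $0$ everywhere else. Continuity is immediate since $f(z)\res B_j$ depends only on the finite tuple $(z(n,j))_{1 \leq n \leq j}$.

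Suppose first $z \in P$. Since each row $z_n$ is eventually $0$, for every $M$ there is a $J$ with $z(n,j) = 0$ for all $n \leq M$ and $j \geq J$; hence $\sum_{n \leq j,\ z(n,j)=1} c_n \to 0$, and the density of random-content positions within $B_j$ tends to $d_0$. Using the block function $E \equiv 0$ one has $\beta_\ell(f(z), N) \leq \frac{1}{N}\abs{\{n < N : f(z)(n) \neq 0\}}$, which (since a generic $r$ has digit $0$ with frequency $1/b$) is asymptotically at most $\frac{b-1}{b}$ times the density of random content. Passing to the limit, $\upe_\ell(f(z)) \leq d_0 \cdot \frac{b-1}{b} = s$ for every $\ell$, so $\upe(f(z)) \leq s$. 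Now suppose $z \notin P$ and pick $n_0$ with $z(n_0,j)=1$ for infinitely many $j = j_1 < j_2 < \cdots$. At $N_{j_k} := \max B_{j_k}$, the rapid growth of $L_j$ forces the density of random-content positions in $[0, N_{j_k}]$ to be at least $d_0 + c_{n_0} - o(1)$. Applying the Borel-Cantelli argument of Lemma~\ref{tel} to this configuration (baseline $\cup$ active $n_0$-tests) shows every block function of width $\leq k$ errs on at least $(d_0 + c_{n_0}) \cdot \frac{b-1}{b} \cdot (1 - O(\epsilon_k))$ of the positions in $[0, N_{j_k}]$. Letting $k \to \infty$, $\upe(f(z)) \geq (d_0 + c_{n_0}) \cdot \frac{b-1}{b} > s$.

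The main obstacle is ensuring that a single fixed $r$ plays the role of ``random content'' uniformly across the uncountable family of configurations parameterised by $z \notin P$. This is resolved by observing that Lemma~\ref{tel}'s proof only invokes countably many statistical conditions on $r$ (bit-frequency estimates on rapidly growing windows, approximate independence of widely separated digits), each defining a set of full measure in $b^\omega$. Any $r$ in the countable intersection---e.g.\ any Martin-L\"of random sequence---meets all these conditions simultaneously, yielding the required lower bound for every $z \notin P$ at once and completing the continuous reduction $f$.
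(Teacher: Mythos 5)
Your overall plan --- rapidly growing blocks $B_j$, a baseline of asymptotic density $d_0 = sb/(b-1)$, test regions switched on by $z$, and a single generic $r$ --- is the right shape, and the $z\notin P$ direction is structurally sound. But the $z\in P$ direction has a genuine gap: you never say how the sub-regions $R_j^{(0)},R_j^{(1)},\dots$ are arranged inside the interval $B_j$. On the natural reading (``reserve disjoint sub-regions of size $\lfloor d_0L_j\rfloor$, $\lfloor c_nL_j\rfloor$'', i.e.\ consecutive), take $N$ to be the endpoint of the copied-$r$ region in a late block $B_j$. Since $L_j\gg\sum_{j'<j}L_{j'}$, the initial segment $[0,N)$ is overwhelmingly made up of digits of $r$, so the density of random-content positions in $[0,N)$ tends to $1$, not to $d_0$. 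For a generic $r$, no $E\in\sE_\ell$ predicts those positions with error rate below $\frac{b-1}{b}(1-o(1))$, so $\beta_\ell(f(z),N)\geq\frac{b-1}{b}(1-o(1))$ along this subsequence of $N$'s, and hence $\upe(f(z))\geq\frac{b-1}{b}>s$ even when $z\in P$. The reduction fails. The inequality $\upe_\ell(f(z))\leq d_0\cdot\frac{b-1}{b}$ requires the random-content density to be close to $d_0$ at \emph{every} prefix $N$, not merely at block boundaries.

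The paper's two proofs both build in this uniformity: the first places the random content on arithmetic progressions $I_i$ of prescribed density, so the prefix density equals the asymptotic density up to $O(1/N)$; the alternate proof fills each block with a sequence generic for a Bernoulli measure whose per-digit distribution already encodes the target noise, so again the prefix density is stable. A repair for your construction is to assign $R_j^{(0)}$ and the active $R_j^{(n)}$'s to interleaved arithmetic progressions within $B_j$ rather than consecutive intervals. A secondary point: your closing paragraph correctly notes that one $r$ must serve all $z$, but ``countably many statistical conditions'' glosses over the quantitative requirement. Within each $B_j$ there are up to $2^j$ activation patterns and $\abs{\sE_j}=b^{b^j}$ block functions, and the per-event failure probability must be small enough that the union bound over all of them is still summable in $j$; this is exactly the role of the pattern sets $P_{i,j}$ and the $\frac{1}{j^2 2^{j^2}}$ error bounds in the paper's proof, and it should be made explicit.
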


\begin{proof}
For $x \in 2^\omega$, we view $x$ as coding the sequence $x_0,x_1,\dots$ in $2^\omega$
where $x_i(j)=x(\langle i,j \rangle)$. We let $P=\{ x \colon \forall i\ \exists 
j_0\ \forall j \geq j_0\ x_i(j)=0\}$. It is well-known that $P$ is 
$\bp^0_3$-complete. 

We define a partition of $\omega$ into disjoint arithmetical sequences as follows. 
Let $I_0=\{ n\colon n \equiv 0 \mod 2\}$, be the set of even integers, and in general let 
$I_n=\{ n \colon n \equiv  2^n -1 \mod 2^{n+1}\}$. The $\{ I_n\}$ are pairwise disjoint 
arithmetic progressions, and $\omega=\bigcup_n I_n$. Note that 
$\omega-\bigcup_{k \leq m}I_k= \{ n\colon n \equiv 2^{m+1}-1 \mod 2^{m+1}\}$. 
Each $I_n$ clearly has density $\frac{1}{2^{n+1}}$.

Fix a set $J\subseteq \omega$ such that 
$\frac{b-1}{b}\left(1-\sum_{i \in J} \frac{1}{2^{i+1}}\right)=s$. 
Let $d_i=\frac{1}{2^{i+1}}$ be the density of $I_i$.
Let $B=\bigcup_{i \notin J}I_k$ and let 
$d_B=1-\sum_{i \in J}\frac{1}{2^{k+1}}=s\frac{b}{b-1}$ be the density of $B$.

We will take two fast growing sequences $\{ a_i\}_{i \in \omega}$
and $\{ b_j\}_{j \in \omega}$ of natural numbers (the $a_i$ will grow faster than the $b_j$).
We will then set $n_{i,j}=a_ib_j$. Also, let $B_{i,j}=[n_{i,j-1},n_{i,j})$ and 
$m_{i,j}=|B_{i,j} \cap I_i|$. Note that $\left|m_{i,j}-\frac{1}{2^{i+1}}(n_{i,j}-n_{i,j-1})\right|
\leq 1$. 

We first define the $b_j$.

Assume $b_0<b_1<\cdots<b_{j-1}$ have been defined. Let $b_j>b_{j-1}$
be large enough so that 
\begin{enumerate}
\item \label{t1}
For each $i\leq j$ with  $i\in J$, the density of $B\cup I_i$ in $[n_{j-1},n_j)$
is at least $(d_B+d_i)(1-\frac{1}{8j})$. 
\item  \label{t2}
$\frac{n_j-n_{j-1}}{n_{j-1}}  > \frac{1-\frac{1}{2j}}{1-\frac{1}{4j}}$.
\item \label{t3} 
For every $i \leq j$ and any $m \geq (d_B+d_i) (n_j-n_{j-1})(1-\frac{1}{8j})$, 
if $A \subseteq [n_{j-1},n_j)$ has size at least $m$, 
and $E\in \sE_j$, then we have that 
$\frac{p_m}{b^m} \geq 1-\frac{1}{j}\frac{1}{2^{j^2}}$, where 
$p_m$ is the number of $s \in b^{[n_{j-1},n_j)}$ such that 
$s$ is $0$ off of $A$ and we have that 
\begin{equation*}
\begin{split}
& |\{ k \in [n_{j-1},n_j) \colon E(s(k-j),\dots,s(k-1))\neq s(k) \}| 
\\ & \qquad \geq(d_B+ d_i)(n_j-n_{j-1}) \frac{b-1}{b} \left(1-\frac{1}{4j}\right).
\end{split}
\end{equation*}
\end{enumerate}
\noindent
Properties~(\ref{t1}) and (\ref{t2}) are easily satisfied, and property~(\ref{t3})
can be satisfied as in the proof of Lemma~\ref{tel}. Also, the properties continue to hold
if we replace $n_{j-1}$ and $n_j$ with $an_{j-1}$ and $an_j$ for any postive integer $a$.

We next define the sequence $a_0<a_1<\cdots$, which will be sufficiently fast growing
with respect to the $\{b_j\}$. Namely, such that for any $i$ and any $j>1$, 
$| i'\colon a_{i'} \leq a_ib_j\} | \leq j$. We could, for example, take 
$a_i=\prod_{i'\leq i}b_{i'}$.  
Set $n_{i,j}=a_ib_j$ and let $B_{i,j}=[n_{i,j-1},n_{i,j})$.
We define now the map $\psi \colon 2^\omega \to b^\omega$ which will be our
reduction from $P$ to $A_3(s)$. 
We first define a particular real $u \in 2^\omega$. To do this, consider 
integers $i <j$. 

\begin{claim} \label{tc}
$| \{ (i',j')\colon i' \geq i \wedge j'>0 \wedge B_{i',j'}\cap B_{i,j} \neq \emptyset\} |
\leq j^2$.
\end{claim}

\begin{proof}
For fixed $i' \geq i$, the number of $j'$ such that $B_{i',j'}\cap B_{i,j} \neq \emptyset$
has size at most $j$, since $B_{i,j}=[n_{i,j-1},n_{i,j})=[a_ib_{j-1},a_i,b_j)$,
 $B_{i',j+1}=[a_{i'}b_j,a_{i',b_{j+1}})$, and $a_{i'}\geq a_i$. Also, the number 
of $i'\geq i$ such that $B_{i',j'}\cap B_{i,j}\neq \emptyset$ for some $j'>0$
is at most $j$. This is because $n_{i',j'-1}=a_{i'}b_{j'-1}$, and if $n_{i',j'-1}\leq 
n_{i,j}$ then we must have $a_{i'} \leq \frac{a_ib_j}{b_{j'-1}}\leq a_ib_j$. 
From the choice of the $a_i$, this set has size at most $j$. 
\end{proof}

Fix for the moment any $j>i$, and consider the block $B_{i,j}$.
Let $W_{i,j} \subseteq \omega^2$ be the set of
$(i',j')$ with $i'\geq i$, $j'>0$, and with $B_{i',j'}\cap B_{i,j}\neq \emptyset$. 
By Claim~\ref{tc}, $|W_{i,j}|\leq j^2$. Consider the set $P_{i,j}$ of 
``patterns,'' by which we mean elements of $2^{W_{i,j}}$. For each pattern 
$p\in P_{i,j}$, and real $u \in b^\omega$, let $s(i,j,p,u)\in b^{B_{i,j}}$ 
be the sequence defined by:
\[
s(i,j,p,u)(k)=
\begin{cases}
u(k) & \text{ if } k \in B\\
u(k) & \text{ if } \exists i', j' \in W_{i,j}\ (P(i',j')=1 \wedge k \in (I_{i'}\cap B_{i',j'})) \\
0 & \text{ otherwise}
\end{cases}
\]

For each fixed pattern $p \in P_{i,j}$, 
from properties~\ref{t2}, \ref{t3} of the $b_j$ it follows that the $\mu$ measure of the 
set of $u \in b^\omega$ such that for all $t\in b^{[0,n_{j-1})}$, if $s'=t\conc s(i,j,p,u)$
then for all $E\in \sE_j$ we have
\begin{equation} \label{eqmi}
\begin{split}
& | \{ k \in B_{i,j}\colon E(s'(k-j),\dots,s'(k-1))\neq s'(k) \}| 
\geq \\ & \qquad (d_B+d_i) (n_j-n_{j-1})\frac{b-1}{b} \left( 1-\frac{1}{4j}\right) 
\end{split}
\end{equation}
\noindent
is at least $1-\frac{1}{j^2} \frac{1}{2^{j^2}}$. Since the number of
patterns in $P_{i,j}$ is at most $2^{j^2}$, we have that the $\mu$ 
measure of the set of $u \in b^\omega$ such that for all patterns $p \in P_{i,j}$
and all $E\in\sE_j$ equation~(\ref{eqmi}) holds is at least 
$1-\frac{1}{j^2}$. By Borel-Cantelli it follows that for fixed $i$ that for $\mu$ 
almost all $u \in b^\omega$ that there are cofinitely many $j$ such that 

($*_{i,j}$) For all $E\in \sE_j$,
and all $t \in b^{[0,n_{i,j-1})}$, and all patterns $p\in W_{i,j}$, if 
$s'=t\conc s(i,j,p,u)$ as above, then 
equation~(\ref{eqmi}) holds.

By countable additivity of $\mu$, for $\mu$ almost all
$u \in b^\omega$ the previous statement holds for all $i$. Fix $u\in b^\omega$ 
in this measure one set.

We now define the continuous map $\psi\colon 2^\omega \to b^\omega$ which will be the reduction from 
$P$ to $A_3(s)$. Let $x \in 2^\omega$ code the $x_i$, where $x_i(j)=x(\langle i,j\rangle)$. 
We define $\psi(x)$ by:

\[
\psi(x)(k)=\begin{cases}
u(k) & \text{ if } k \in B \\
u(k) & \text{ if }  \exists i,j\ 
(j>0 \wedge k \in (B_{i,j} \cap I_i) \wedge x_{i^{-}}(j)=1) \text{ where $i$ is the}\\
&  \quad \text{ $i^{-}$th element of $J$ }\\
0 & \text{ otherwise }
\end{cases}
\]
\noindent
The map $\psi$ is clearly continuous. We show that $x \in P$ iff $\upe(\psi(x))\leq s$. 
Suppose first that $x \in P$, so $\forall i\ \exists j_0\ \forall j \geq j_0\ x_i(j)=0$. 
Then for all $i \in J$ we have that for large enough $k \in I_i$ that 
$\psi(x)(k)=0$. The $E \in \sE$ (of width $1$) which constantly outputs $0$ 
will then  correctly compute $\psi(x)(k)$ for all sufficiently large 
$k \in I_i$ whenever $i \in J$. For $i \notin J$, we will have 
$\lim_{n\to \infty} \frac{1}{n} |\{ k <n \colon (k \in I_i) \wedge E(x \res [k-1,k-1])\neq x(k)\}|=
\frac{1}{2^{i+1}}\frac{b-1}{b}$. From this it follows that 
$\lim_{n \to \infty} \frac{1}{n}  |\{ k <n \colon E(x\res [k-1,k-1])\neq x(k)\}|=
\frac{b-1}{b}\sum_{i \notin J} \frac{1}{2^{i+1}}=s$. So, $\upe(\psi(x)) =s$
and so $\psi(x)\in A_3(s)$.

Suppose next that $x \notin P$. Let $i_0$ be least such that there are infinitely
many $j$ such that $x_{i_0}(j)=1$. Let $i^+_0$ be the $i_0$th element of $J$. 
If $i<i^+_0$ is in $J$, then for large enough $k\in I_i$ we have $\psi(x)(k)=0$.
If $i<i^+_0$ is  not in  $J$, then for large enough $k \in I_i$ we have $\psi(x)(k)=u(k)$. 
Also, for infinitely many $j$ we have that $\psi(x)(k)=u(k)$
for all $k \in I_{i^+_0} \cap B_{i^+_0,j}$. Fix $E\in \sE$. Say $E\in \sE_{j_0}$. 
From the definition of $u$, there are cofinitely many $j$ such that ($*_{i^+_0,j}$) holds. 
Intersecting a cofinite and an infinite set gives a $j>\max\{ i^+_0, j_0\}$ such that 
$x_{i_0}(j)=1$, ($*_{i^+_0,j}$) holds, and for all $i<i_0$  we have that 
$x_{i}(k)=0$ for all $k \geq j$. 
Thus, for all $i<i^+_0$ which are in $J$ we have that $u(k)=0$ 
for all $k \geq j$ with $k \in I_{i}$.
Let $p_0 \in P_{i^+_0,j}$ be the pattern such that 
$p_0(i',j')=1$ iff $i' \notin J$ or $[(i' \in J) \wedge x_{{i'}^-}(j')=1]$ 
where $i'$ is the ${i'}^-$th element of 
$J$. From ($*_{i^+_0,j}$) applied to $t=\psi(x)\res n_{i^+_0,j-1}$ and the pattern $p=p_0$, 
and noting that from  the definition of $\psi(x)$ that 
$\psi(x)\res B_{i^+_0,j}=\psi(x)\res [n_{i^+_0,j-1},n_{i,j})=s(i^+_0,j,p_0,u)$,
we have that

\begin{equation} \label{eqmj}
\begin{split}
& | \{ k \in B_{i^+_0,j}\colon E(\psi(x)(k-j_0),\dots,\psi(x)(k-1))\neq \psi(x)(k) \}| 
\\ & \qquad \geq
(d_B+d_i) (n_{i^+_0,j}-n_{i^+_0,j-1})\frac{b-1}{b} \left( 1-\frac{1}{4j}\right) 
\\ & \qquad = \left(s+d_i\frac{b-1}{b}\right) (n_{i^+_0,j}-n_{i^+_0,j-1}) \left( 1-\frac{1}{4j}\right),
\end{split}
\end{equation}
\noindent
and thus by property (\ref{t2}) of the $b_j$, the density of $k \in [0,n_{i^+_0,j})$ such that 
\[
E(\psi(x)(k-j_0),\dots,\psi(x)(k-1))\neq \psi(x)(k)
\]
is at least 
$\left(s+d_i\frac{b-1}{b}\right)  \left( 1-\frac{1}{2j}\right)$.
Since this holds for infinitely many $j$, we have that $\upe(\psi(x))\geq s+d_i\frac{b-1}{b}
>s$. Thus, $\psi(x)\notin A_3(s)$. 

\end{proof}
We now present the alternate simpler proof of Lemma~\ref{a3}.
\begin{proof}
Let $0 \leq s < \frac{b-1}{b}$. For each $i\in \N$ pick a probability vector 
$(p_{i,0}, \cdots, p_{i,b-1})$ such that $1 - \max_d p_{n,d} =  1 - p_{n,0} = s+ \frac{1- s-1/b}{n}$ 
and construct real numbers $u_n$ such that for any $k$
$$
\lim_v \frac{I(u_n,b_1 \cdots b_k,v)}{v} = \prod_{j=1}^k p_{n,b_j}.
$$
Then $\beta_{\ell}(u_i) = s + \frac{1-s-1/b}{i}$ for every $\ell$ and the 
constant $0$ block function $E_0$ is the minimizer of 
$\inf_{E\in \mathcal{E}_\ell} \limsup_N \beta_E(x,N)$.
Let $(a_j)$ be a sufficiently quickly growing sequence of integers with 
$a_1 = 1$ such that for $1 \leq \ell \leq j$, $1 \leq i \leq j$, and for all 
$n \geq \frac{a_j}{1-1/j}$ we have 
\begin{align*}
&\abs{\beta_{E_0}(u_i,n) - \beta_\ell(u_i)} = \abs{ \beta_{\ell}(u_i,n) - \beta_\ell(u_i)} < 
\frac{1}{j} \\ \intertext{and}
&\frac{a_{j+1}}{a_j} > j^2.
\end{align*}
Put $B_j = [a_j, a_{j+1})$.

We view $x \in 2^\omega$ as coding the sequence $x_0, x_1, \cdots$ in $2^\omega$ 
where $x_i(j) = x(\langle i,j\rangle)$. Define $m(j) = \min\{j, \min\{i : x_i(j) = 1 \} \}$ 
and define $\psi(x)$ by
$$
\psi(x)_{{}| B_j}(k)= u_{m(j)}(k - a_j).
$$
The map $\psi$ is continuous. Suppose $x \in P$. Then $\liminf_{j} m(j) = \infty$. Fix $\ell$ and note
\begin{align*}
& \inf_{E \in \mathcal{E}_\ell} \limsup_{N} \frac{1}{N} |\br{k \leq N : 
\psi(x)(k) \neq E(\psi(x)(k+1), \cdots, \psi(x)(k+\ell))}|\\
& = \limsup_N \frac{1}{N} | \br{k \leq N : \psi(x)(k) \neq 0} |\\
& = \limsup_j \sup_{N \in B_j} \sum_{k=1}^{j-1} \frac{1}{N} | \br{ v \leq | B_k| : 
u_{m(k)}(v) \neq 0}| + \frac{1}{N} | \br{ 0 \leq v \leq N-a_j : u_{m(j)}(v) \neq 0}|.
\end{align*}
Now for $N \geq a_j$ and $k \leq j-2$
\begin{align*}
\frac{1}{N} | \{1 \leq v \leq | B_k| : u_{m(k)}(v) \neq 0\} |
& = \frac{| B_k|}{N} \beta_{\ell}(u_{m(k)}, | B_k|) \\
& \leq \frac{a_{k+1} - a_k}{a_j} \leq \frac{a_{j-1}}{a_j} < \frac{1}{(j-1)^2}.
\end{align*}
and
\begin{align*}
\frac{1}{N} | \{1 \leq v \leq |B_{j-1}| : u_{m(j-1)}(v) \neq 0\}| 
& \leq \frac{a_{j}-a_{j-1}}{N} \pr{\beta_\ell(u_{m(j-1)}) + \frac{1}{j-1}} \\
& \leq \frac{a_j}{N} \pr{\beta_\ell(u_{m(j-1)}) + \frac{1}{j-1}}.
\end{align*}
For $a_j \leq N \leq \frac{a_j}{1-1/j} \leq a_{j+1} $ we have
\begin{align*}
\frac{1}{N} | \br{0 \leq v \leq N-a_j : u_{m(j)}(v) \neq 0}| \leq 
\frac{N-a_j}{N} \leq \frac{\frac{a_j}{1-1/j} - a_j}{a_j} = \frac{1}{j-1}.
\end{align*}
On the other hand for $\frac{a_j}{1-1/j} \leq N \leq a_{j+1}$
\begin{align*}
\frac{1}{N} | \br{0\leq v \leq N-a_j : u_{m(j)}(v) \neq 0}| 
&= \frac{N - a_j}{N} \beta_\ell(u_{m(j)},N-a_j)\\
&\leq \frac{N-a_j}{N} \pr{\beta_\ell(u_{m(j)}) + \frac{1}{j} }.
\end{align*}

Thus
\begin{align*}
\lim_\ell \beta^+_\ell(\psi(x)) &\leq \lim_\ell \limsup_j \sup \{t \beta_\ell(u_{m(j-1)}) 
+ (1-t) \beta_\ell(u_{m(j)}) : t \in [0,1]\}
\\ & \qquad
 + \frac{j-1}{(j-1)^2} + \frac{2}{j-1} + \frac{1}{j}\\
& = \lim_\ell \left( s + \limsup_j \max\br{\frac{1-s-1/b}{m(j-1)}, \frac{1-s-1/b}{m(j)}}\right) = s.
\end{align*}
Now suppose $x \notin P$. Then $\liminf_n m(n) = M < \infty$. Fix $\ell$ and note
\begin{align*}
&\inf_{E \in \mathcal{E}_\ell}  \limsup_{N} \frac{1}{N} |\br{k \leq N : 
\psi(x)(k) \neq E(\psi(x)(k+1), \cdots, \psi(x)(k+\ell))}|\\
& = \limsup_j \sup_{N \in B_j} \sum_{k=1}^{j-1} \frac{|B_k|}{N} 
\beta_\ell(u_{m(k)}, | B_k|) + \frac{N-a_j}{N} \beta_\ell(u_{m(j)}, N-a_j) \\
& \geq \limsup_j \frac{a_{j+1}-a_j}{a_{j+1}} \beta_\ell(u_{m(j)}, a_{j+1} - a_j)
\end{align*}
which implies
$$
\lim_\ell \beta^+_{\ell}(\psi(x)) \geq s + \frac{1-s-1/b}{M} > s.
$$
\end{proof}

We next show the lower bound for $A_2(s)$. The proof is similar to that for $A_3(s)$.

\begin{lem} \label{lema2}
For any $s \in (0,\frac{b-1}{b}]$ the set $A_2(s)$ is $\bp^0_3$-complete.
\end{lem}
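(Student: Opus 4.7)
The plan is to mirror the first proof of Lemma~\ref{a3}, but to use noise \emph{removal} rather than noise \emph{addition} as the mechanism coupling $x$ and $\psi(x)$. The upper bound $A_2(s)\in\bp^0_3$ follows from the monotonicity of $\loe_\ell$ in $\ell$: any $E\in\sE_\ell$ extends to $\sE_{\ell+1}$ by ignoring a coordinate, so $\beta_\ell(x,N)$ is non-increasing in $\ell$, and hence $\loe(x)\ge s$ iff $\loe_\ell(x)\ge s$ for every $\ell$. This gives
\[
x\in A_2(s)\ \Longleftrightarrow\ \forall\ell\ \forall\epsilon\in\Q^+\ \exists k\ \forall N\ge k\ \beta_\ell(x,N)\ge s-\epsilon,
\]
which is $\bp^0_3$.

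For $\bp^0_3$-hardness I will reduce from the $\bp^0_3$-complete set $P=\{x\in 2^\omega:\forall i\ \exists j_0\ \forall j\ge j_0\ x_i(j)=0\}$ used in Lemma~\ref{a3}. Retain the partition of $\omega$ into arithmetic progressions $I_n$ of density $\frac{1}{2^{n+1}}$ and pick $J\subseteq\omega$ with $d_B:=\sum_{i\in J}\frac{1}{2^{i+1}}=\frac{sb}{b-1}$ (possible since $s\in(0,\frac{b-1}{b}]$); set $B=\bigcup_{i\in J}I_i$. Use the same doubly-indexed fast-growing blocks $B_{i,j}=[n_{i,j-1},n_{i,j})$ with $n_{i,j}=a_ib_j$ as in Lemma~\ref{a3}, and fix a ``generic'' $u\in b^\omega$ so that the baseline $y_0:=u\cdot\mathbf 1_B$ satisfies $\loe(y_0)\ge d_B\frac{b-1}{b}=s$. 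Define the continuous reduction $\psi:2^\omega\to b^\omega$ by $\psi(x)(k)=u(k)$ if $k\in I_i\cap B_{i,j}$ for some $i\in J$ with $x_{i^-}(j)=0$ (where $i$ is the $i^-$-th element of $J$), and $\psi(x)(k)=0$ otherwise.

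If $x\in P$, then for every fixed $i^-$ only finitely many $j$ satisfy $x_{i^-}(j)=1$, and a short tail estimate (using that $\sum_{i\in J}d_i$ converges and that $I_i$ only enters $[0,N)$ once $2^i-1\le N$) shows that the set $D=\{k:\psi(x)(k)\neq y_0(k)\}$ has density $0$ in $\omega$. Combined with the perturbation bound
\[
|\beta_E(\psi(x),N)-\beta_E(y_0,N)|\le(\ell+1)\cdot\frac{|D\cap[0,N+\ell)|}{N},
\]
valid for every $E\in\sE_\ell$, this gives $\loe_\ell(\psi(x))=\loe_\ell(y_0)$ for each $\ell$, hence $\loe(\psi(x))=\loe(y_0)\ge s$ and $\psi(x)\in A_2(s)$. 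If $x\notin P$, take the least $i^-_0$ with $x_{i^-_0}(j)=1$ infinitely often and let $i_0\in J$ be its image. On those blocks $B_{i_0,j}$ the whole of $I_{i_0}\cap B_{i_0,j}$ is zeroed out, so the constant-zero block function $E_0$ makes the density of non-zero values in $B_{i_0,j}$ at most $(d_B-d_{i_0})\frac{b-1}{b}+o(1)=s-d_{i_0}\frac{b-1}{b}+o(1)$; with the $n_{i_0,j}$ growing fast enough this passes to $\liminf_N\beta_{E_0}(\psi(x),N)\le s-d_{i_0}\frac{b-1}{b}$, so $\loe(\psi(x))<s$.

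The main obstacle is the liminf strengthening of Lemma~\ref{tel} needed to produce $u$: the Borel--Cantelli step in Lemma~\ref{a3} only needed the error-density bound (\ref{eqmi}) at one $N$ per block, which sufficed to push $\upe$ above $s$, whereas here I need the analogous bound at \emph{every} sufficiently large $N$ in order to pin $\loe(y_0)\ge s$. This is achieved by union-bounding the same large-deviations estimate over all $N\in B_{i,j}$; the per-$N$ failure probability is exponentially small in $N$, so a polynomial-in-$|B_{i,j}|$ union bound still yields per-block failure probability decaying faster than $1/j^2$, and Borel--Cantelli then produces a full-measure set of generic $u$'s as required.
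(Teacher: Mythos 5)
Your proof is correct, and it takes a genuinely cleaner route than the paper's in the "$x\in P$" direction. The paper's proof reduces from $Q=\{x:\forall i\,\exists j_0\,\forall j\ge j_0\ x_i(j)=1\}$ (the complement of your $P$) and, crucially, routes the Borel--Cantelli argument through the machinery of \emph{patterns} $p\in P_{i,j}$: the generic $u$ must satisfy the error-count estimate simultaneously for every pattern, which is what lets the paper handle the dependence of $\psi(x)$ on $x$ within each block. Your perturbation bound replaces all of that: since $\psi(x)$ differs from the fixed baseline $y_0=u\cdot\mathbf 1_B$ on a density-zero set $D$ when $x\in P$, and each position in $D$ affects at most $\ell+1$ summands of $\beta_E(\cdot,N)$, you get $\loe_\ell(\psi(x))=\loe_\ell(y_0)$ outright, and need $u$ to be ``generic'' only for the single sequence $y_0$. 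This is a real simplification. The price is exactly what you identify: Lemma~\ref{tel} gives a per-block ($\upe$-type) estimate, whereas you need the full $\liminf$ statement $\loe(y_0)\ge s$, i.e.\ the error-count bound at \emph{every} sufficiently large $N$. The paper's proof of this lemma actually asserts the same passage from per-block density to $\loe$ with minimal comment, so you are not worse off; and your sketch does work, though I would phrase it more carefully: the bijection argument of Lemma~\ref{tel} shows that, for fixed $E\in\sE_\ell$, the error count in $B\cap[0,N)$ has the distribution of a $\mathrm{Binomial}(|B\cap[0,N)|,\frac{b-1}{b})$, whose lower-tail failure probability is $\exp(-cN)$ \emph{unconditionally} over $u\res[0,N+\ell)$; summing over all $N$ (and then over the finitely many $E\in\sE_\ell$, then over $\ell$ and rational $\epsilon$) gives the desired full-measure set directly, with no need to organize $N$ into blocks at all. (Your phrase ``exponentially small in $N$'' is right in that reading; if instead one conditions on the prefix $u\res[0,n_{i,j-1})$, the tail is only $\exp(-c(N-n_{i,j-1}))$, and one must dispose of the $N$ near $n_{i,j-1}$ by observing that the inherited prefix density already covers them — either organization works, but it is worth being explicit.) The ``$x\notin P$'' direction is the same as the paper's, using the constant-zero predictor on the blocks $B_{i_0,j}$ where $I_{i_0}$ is zeroed, and your final displayed $\liminf$ is a slight overstatement (the ratio $n_{i_0,j-1}/n_{i_0,j}$ need not tend to $0$ under the paper's property~(2)), but the strict inequality $\loe(\psi(x))<s$ still follows since $1-n_{i_0,j-1}/n_{i_0,j}$ is bounded below. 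Overall: correct, different, and simpler in the key step.
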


\begin{proof}
We reduce the set $Q=\{ x \in 2^\omega \colon \forall i\ \exists j_0\ \forall j\geq j_0\
x_i(j)=1\}$ to $A_2(s)$. Let $J\subseteq \omega$ be such that 
$\frac{b-1}{b}\sum_{i\in J} d_i=s$, where we recall $d_i=\frac{1}{2^{i+1}}$ is the density of $I_i$. 
Let the sequences $\{ a_i\}$, $\{ b_j\}$ be as in Lemma~\ref{a3},
and as in that lemma let $n_{i,j}=a_i b_j$, $B_{i,j}=[n_{i,j-1},n_{i,j})$.
We use also the notion of pattern defined as follows. For $i<j$ let
\[
W_{i,j}=\{ (i',j')\colon j'>0 \wedge B_{i',j'}\cap B_{i,j}
\neq \emptyset \}
\]
\noindent
and let $P_{i,j}$ be the set of functions $p$ from $W_{i,j}$ to $\{0,1\}$ such that 
if $i'<i$ then $p(i',j_1)=p(i',j_2)$ for all $j_1, j_2$. Although $W_{i,j}$ now has size 
greater than $j^2$, this restriction on $p$ implies that $|P_{i,j}|
\leq 2^{j^2+i}\leq 2^{j^2+j}$. 
 
Arguing as in Lemma~\ref{a3}, for each $i \in \omega$ there is a $\mu$ measure one
set $A_i$ of $u \in b^\omega$ such that for cofinitely many $j$, all $E\in \sE_j$,
all $t \in b^{[0,n_{j-1})}$, 
and all patterns $p \in P_{i,j}$, if we define $s(i,j,p,u)\in b^{B_{i,j}}$ by

\[
s(i,j,p,u)(k)=
\begin{cases}
u(k) & \text{ if } \exists i', j' \in W_{i,j}\ (P(i',j')=1 \wedge k \in (I_{i'}\cap B_{i',j'})) \\
0 & \text{ otherwise}
\end{cases}
\]
\noindent
then if $s'=t \conc s(i,j,p,u)$ we have

\begin{equation} \label{eqmk}
\begin{split}
& | \{ k \in B_{i,j}\colon E(s'(k-j),\dots,s'(k-1))\neq s'(k) \}| 
\geq \\ & \qquad \pr{\sum_{\substack{i'\leq i \\  i'\in J}} d_i} 
(n_j-n_{j-1})\frac{b-1}{b} \left( 1-\frac{1}{4j}\right).
\end{split}
\end{equation}

We fix $u \in b^\omega$ in all of the measure one sets $A_i$. We define 
$\psi \colon 2^\omega \to b^\omega$ by:

\[
\psi(x)(k)=\begin{cases}
u(k) & \text{ if }  \exists i,j\ 
(i \in J \wedge j>0 \wedge k \in (B_{i,j} \cap I_i) \wedge x_{i^{-}}(j)=1) 
\text{ where }\\ & \quad 
\text{ $i$ is the $i^{-}$th element of $J$ }\\
0 & \text{ otherwise }
\end{cases}
\]

We show $\psi$ is a reduction from $Q$ to $A_2(s)$. 

If $x \notin Q$, then there is an $i_0$ such that for infinitely many $j$ 
we have $x_{i_0}(j)=0$. Let $i_0^+$ be the $i_0$th element of $J$. 
We may assume that $u$ is such that for every $i$ that 
$\lim_{n \to \infty} \frac{1}{n} |\{ k \in I_i \colon u(k)=0 \} | =\frac{d_i}{b}$. 
It follows that for $i\neq i_0^+$ and any $\epsilon>0$ that for all sufficiently large $j$ that 
\[
\frac{1}{|B_{i^+_0,j}|} |\{ k \in B_{i^+_0,j}\cap I_i\colon 
u(k)\neq 0\} | \leq (d_i+\epsilon) \frac{b-1}{b}.
\]
\noindent
It follows that for any $i_1>i^+_0$ and $\epsilon>0$ that for all sufficiently large $j$ with 
$x_{i_0}(j)=0$ that 
\[
\frac{1}{|B_{i^+_0,j}|} |\{ k \in B_{i^+_0,j}\colon 
\psi(x)(k)\neq 0\} | \leq \pr{\sum_{\substack{i\leq i_1\\ i\neq i^+_0}}d_i +\epsilon}\frac{b-1}{b}
+\sum_{i>i_1} d_i
\]
\noindent
It follows by considering the trivial $E\in \sE$ which always outputs $0$ that
\[
\loe(\psi(x)) \leq \sum_{\substack{i\neq i^+_0\\ i \in J}}d_i \frac{b-1}{b}<
\sum_{i\in J} d_i\frac{b-1}{b}=s.
\] 
Thus, $\psi(x) \notin A_2(s)$. 

Suppose next that $x \in Q$. Let $i \in J$. For all sufficiently large $j$ we
have that $\psi(x)(k)=u(k)$ for all $k \in B_{i,j} \cap I_{i'}$ where 
$i'\leq i$ and $i'\in J$. For such $j$, consider the pattern $p \in P_{i,j}$
such that $p(i',j')=1$ if $i'<i$, and for $i'\geq i$ with $i'\in J$ and $j'>0$ we have 
$p(i',j')=x_{{i'}^-}(j')$, where $i'$ is the ${i'}^-$th element of $J$. From the properties of
$u$ we have that for all $E \in \sE_j$ that 
\begin{equation*}
\begin{split}
&\frac{1}{|B_{i,j}|} |\{ k \in B_{i,j} \colon 
E(\psi(x)(k-j),\dots,\psi(x)(k-1))\neq \psi(x)(k) \} |
\\ & \quad \geq \pr{\sum_{\substack{i' \leq i \\ i' \in J}} d_i }\pr{\frac{b-1}{b}} \left( 1- \frac{1}{4j} \right).
\end{split}
\end{equation*}
\noindent
This gives that $\loe(\psi(x)) \geq (\sum_{i \in J} d_i) (\frac{b-1}{b})=s$, and 
so $\psi(x) \in A_2(s)$.
\end{proof}

We next show that the set $A_4(s)$ is $\bp^0_2$-complete for any $s \in (0,\frac{b-1}{b}]$.

\begin{lem} \label{lema4}
For any $s \in (0,\frac{b-1}{b}]$, the set $A_4(s)$ is $\bp^0_2$-complete. 
\end{lem}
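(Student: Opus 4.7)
The plan is to show the upper bound $A_4(s) \in \bp^0_2$ by unraveling quantifiers, and to establish $\bp^0_2$-hardness by a continuous reduction from a canonical $\bp^0_2$-complete set. For the upper bound, since $\upe_\ell(x)$ is monotonically decreasing in $\ell$ with limit $\upe(x)$, one has
\[
x \in A_4(s) \leftrightarrow \forall \ell\ \forall \epsilon \in \Q^+\ \forall M\ \exists N \geq M\ (\beta_\ell(x,N) > s - \epsilon).
\]
For fixed $\ell,\epsilon,M,N$ the matrix defines a clopen set of $x$ (since $\beta_\ell(x,N)$ depends only on finitely many digits of $x$), and the three universal quantifiers collapse to one, giving $A_4(s) \in \bp^0_2$.

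For hardness, I will reduce the canonical $\bp^0_2$-complete set $Q = \{ x \in 2^\omega \colon \forall m\ \exists n \geq m\ x(n) = 1 \}$ to $A_4(s)$ as follows. Fix a base-$b$ normal number $u \in b^\omega$, so $\loe(u) = \upe(u) = \frac{b-1}{b}$. Choose $0 = a_0 < a_1 < a_2 < \cdots$ growing fast enough that $a_n/a_{n+1} \to 0$, let $B_n = [a_n, a_{n+1})$, and define a continuous $\psi \colon 2^\omega \to b^\omega$ by
\[
\psi(x)(k) = \begin{cases} u(k - a_n) & \text{if } k \in B_n \text{ and } x(n) = 1, \\ 0 & \text{otherwise.} \end{cases}
\]
If $x \notin Q$, then $\psi(x)$ is eventually zero, so the constant-$0$ block function yields $\upe(\psi(x)) = 0 < s$. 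If $x \in Q$, I will show $\upe_\ell(\psi(x)) \geq \frac{b-1}{b}$ for every $\ell$: for each $n$ with $x(n) = 1$, the positions $k \in B_n$ whose length-$\ell$ future-context lies entirely in $B_n$ contribute to $\beta_E(\psi(x), a_{n+1})$ exactly as $u$ contributes at position $k - a_n$, so that uniformly in $E \in \sE_\ell$
\[
\beta_E(\psi(x), a_{n+1}) \geq \frac{|B_n|}{a_{n+1}} \beta_E(u, |B_n|) - \frac{\ell}{a_{n+1}},
\]
and therefore the same inequality holds with $\beta_\ell$ replacing $\beta_E$. Since $|B_n|/a_{n+1} \to 1$ and $\beta_\ell(u, |B_n|) \to \frac{b-1}{b}$ (as $u$ is normal), letting $n \to \infty$ through $\{n : x(n) = 1\}$ gives $\upe_\ell(\psi(x)) \geq \frac{b-1}{b}$ for each $\ell$, and thus $\upe(\psi(x)) \geq \frac{b-1}{b} \geq s$.

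The main technical point is to arrange that the most recent ``on'' block dominates the prefix $[0, a_{n+1})$ so that its noise is not diluted by the preceding zero stretches; the rapid growth condition $a_n/a_{n+1} \to 0$ takes care of this, and the rest is routine.
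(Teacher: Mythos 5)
Your proof is correct and follows the same overall reduction strategy as the paper (reduce the same $\bp^0_2$-complete ``infinitely many ones'' set $Q$ by copying a fixed $u$ onto the blocks $B_n$ with $x(n)=1$ and writing zeros elsewhere, with block lengths $|B_n|/a_{n+1}\to 1$), but you simplify the key construction. The paper builds a specialized $u$ via a Borel--Cantelli argument resting on Lemma~\ref{tel}, tuned so that each on-block of $u$ carries noise close to $d\,\frac{b-1}{b}=s$, where $d=s\frac{b}{b-1}$ is the density of an auxiliary set $A$. You instead take $u$ to be any fixed base-$b$ normal number, use the fact (a direct consequence of normality, since for each fixed $E\in\sE_\ell$ the proportion of $n$ with $c_n\neq E(c_{n+1},\dots,c_{n+\ell})$ tends to $\frac{b-1}{b}$, and $\sE_\ell$ is finite) that $\beta_\ell(u,N)\to\frac{b-1}{b}$ for every $\ell$, and deduce the stronger conclusion $\upe(\psi(x))\geq\frac{b-1}{b}\geq s$ when $x\in Q$. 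This avoids the measure-theoretic construction entirely; there is no need to dial the noise down to exactly $s$, since any noise $\geq s$ suffices for hardness. You also correctly spell out the $\bp^0_2$ upper bound, noting the crucial point that $\upe_\ell$ is non-increasing in $\ell$ so a single outer universal quantifier over $\ell$ works --- the paper relegates this to a ``straightforward computation.''
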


\begin{proof}
Let $Q \subseteq 2^\omega$ be the set $Q=\{ x \colon \forall i\ \exists j \geq i\ 
x(j)=1\}$. Let $A \subseteq \omega$ be a set of density $d=s \frac{b}{b-1}\in (0,1]$. 
We define the integers $b_0<b_1<\cdots$ inductively, and set 
$B_j=[b_{j-1},b_j]$. Given $b_0,\dots,b_{j-1}$, we let $b_j>b_{j-1}$
be large enough so that for each $t \in 2^{b_{j-1}}$, we have that for all $E\in \sE_j$ 
that 
\begin{equation*}
\begin{split}
& \Bigg | \Bigg \{ u \in 2^{B_j} \colon \frac{1}{|B_j|} \Big | \Big \{ k \in B_j \colon E(t\conc u(k-j),
\dots, t\conc u(k-1)) \neq t\conc u(k) \Big \} \Big | \\ & \quad \geq \frac{b-1}{b} d-\frac{1}{j} \Bigg \} \Bigg |
\geq \left( 1-\frac{1}{2^j j^2} \right) 2^{|B_j|}.
\end{split}
\end{equation*}
\noindent
This is possible by law of large numbers and Lemma~\ref{tel}.
By Borel-Cantelli, for $\mu$ almost all $x \in 2^\omega$ we have that 
\[
\exists j_0\ \forall j \geq j_0\ \forall t \in 2^{B_j}\ \forall E \in \sE_j\ 
\frac{1}{|B_j|}| \{ k \in B_j \colon E(x_s(k-j),
\dots, x_t(k-1)) \neq x_t(k) \}| \geq s-\frac{1}{j}.
\]
\noindent
where $x_t$ is the result of replacing $x\res [0,b_{j-1})$ with $t$. 
Fix $u\in 2^\omega$ in this measure one set. 

We define $\psi \colon 2^\omega \to 2^\omega$ by:

\[
\psi(x)(k)=
\begin{cases}
u(k) &\text{ if } \exists j\ (k\in B_j \wedge x(j)=1)\\
0 & \text{ otherwise }
\end{cases}
\]
If $x \in Q$, then there are infinitely many $j$ such that $x(j)=1$ and thus 
$\psi(x)\res B_j=u\res B_j$. From the definition of $u$, there is a tail of these $j$ for which 
\[
\forall E \in \sE_j\ 
\frac{1}{|B_j|}| \{ k \in B_j \colon E(\psi(x)(k-j),
\dots, \psi(x)(k-1)) \neq \psi(x)(k) \}| \geq s-\frac{1}{j}.
\]
Thus, for any $E \in \sE$ we have that 
\[
\limsup_j 
\frac{1}{|B_j|}| \{ k \in B_j \colon E(\psi(x)(k-j),
\dots, \psi(x)(k-1)) \neq \psi(x)(k) \}| \geq s.
\]
We may assume that 
$\frac{\sum_{i<j}|B_i|}{|B_j|} \to 0$ with $j$, and it follows that 
$\upe (\psi(x))\geq s$, so $\psi(x) \in A_4(s)$. 

If $x \notin Q$, then $\psi(x)(k)$ is $0$ for all large enough $k$. This gives that 
$\upe(\psi(x))=0 <s$, and so $\psi(x) \notin A_4(s)$.
\end{proof}

We next show the lower-bound for $A_1(s)$.

\begin{lem} \label{inflower}
For $s \in [0,\frac{b-1}{b})$, the set $A_1(s)$ is $\bp^0_4$-hard.
\end{lem}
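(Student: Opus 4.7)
The plan is to prove $A_1(s)$ is $\bp^0_4$-hard by refining the reduction of Lemma~\ref{a3}, adding an extra layer of indexing to accommodate the additional quantifier alternation distinguishing $\bp^0_4$ from $\bp^0_3$. I would reduce from the standard $\bp^0_4$-complete set
$$P_4 = \{x \in 2^\omega : \forall i\ \exists j_0\ \forall j \geq j_0\ \exists k\ x_i(j, k) = 1\},$$
viewing $x \in 2^\omega$ as coding a three-indexed array via $x_i(j,k) = x(\langle i,j,k\rangle)$.

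First I would fix, exactly as in Lemma~\ref{a3}, a subset $J \subseteq \omega$ so that $\tfrac{b-1}{b}\bigl(1 - \sum_{i\in J} d_i\bigr) = s$, where $d_i = 2^{-i-1}$ is the density of the arithmetic progression $I_i$. Instead of the doubly-indexed block boundaries $n_{i,j} = a_i b_j$ used there, I would introduce \emph{triply}-indexed boundaries $n_{i,j,k} = a_i b_j c_k$ with $\{a_i\}, \{b_j\}, \{c_k\}$ sufficiently rapidly growing so that each innermost sub-block $B_{i,j,k}$ dominates the averaged noise over all earlier blocks. A Borel-Cantelli argument along the lines of Lemma~\ref{tel} but ranging over the enlarged pattern space (patterns indexed by triples instead of pairs) yields a single reference $u \in b^\omega$ of full measure, uniformly generic for all block-function predictions across all relevant pattern assignments. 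I would define $\psi\colon 2^\omega \to b^\omega$ continuously by setting $\psi(x)(k)=u(k)$ for $k \in B = \omega - \bigcup_{i' \in J} I_{i'}$, and for each $i'\in J$ setting $\psi(x)(k) = u(k)$ on $I_{i'} \cap B_{i,j,k}$ iff $x_{(i')^-}(j,k) = 0$, and $0$ otherwise (where $(i')^-$ is the index of $i'$ in $J$).

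For the forward direction, if $x \in P_4$ then for each $i^-$ there is a threshold $j_0(i^-)$ such that every $j \geq j_0(i^-)$ admits a witness $k = k(i^-,j)$ deactivating the $I_{i'}$-level. Given $\epsilon > 0$, I would choose $I^*$ large enough that $\sum_{i^- > I^*} d_{i^-}\tfrac{b-1}{b} < \epsilon$, and then interleave the individual witnesses across the $c_k$-refinement of each macro block $B_{i,j}$: partition the $c_k$-indexed sub-blocks into $I^*$ disjoint slots (one per level $i^- \leq I^*$) and use the $P_4$-witness $k(i^-,j)$ on the $i^-$th slot. This produces a cofinal sequence of $N$ with $\beta_\ell(\psi(x),N) \leq s + \epsilon$; since the genericity of $u$ is level-independent, letting $\epsilon \to 0$ gives $\loe(\psi(x)) \leq s$. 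Conversely, if $x \notin P_4$ take a least violating index $i_0$: then for infinitely many $j$ we have $\forall k\ x_{i_0^-}(j,k)=0$, so $I_{i_0}$ is activated on all sub-blocks inside those macro blocks. A Lemma~\ref{tel}-style argument then forces $\beta_\ell(\psi(x),N) \geq s + d_{i_0}\tfrac{b-1}{b}(1 - o(1))$ for all sufficiently large $N$ and uniformly in $\ell$, giving $\loe(\psi(x)) > s$.

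The main technical obstacle is the combinatorial interleaving in the forward direction, i.e.\ constructing cofinally many sub-blocks whose noise is close to $s$ when $P_4$ only supplies $\exists k$-witnesses one $(i,j)$-pair at a time. The $I^*$-slot partition of the $c_k$-refinement resolves this by ensuring each inner sub-block is responsible for only a single $i^-$-condition, while the tail contribution from $i^- > I^*$ is absorbed into the $\epsilon$-slack. The supporting Borel-Cantelli estimate is a direct generalization of that of Lemma~\ref{tel}, mirrored as in the $\mu$-measure computations of Lemma~\ref{a3} but with one additional index.
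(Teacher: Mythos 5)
There are two genuine gaps here, and the paper's proof is built around precisely the two ideas your sketch lacks: a sequential search procedure to coordinate the $\exists k$ witnesses across levels, and a period-replacement device in place of writing $0$s.

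The forward direction breaks because the $\exists k$ witnesses for different levels need not coincide. Take $x$ with $x_{i^-}(j,k)=1$ iff $k=i^-$: then $x\in P_4$, but no sub-block $B_{i,j,k}$ simultaneously deactivates all levels $i^-\leq I^*$, and your slot-partition does not help. In the slot devoted to level $i^-$, every other $J$-level below the $\epsilon$-threshold still copies $u$, so the density of prediction failures over that slot (which dominates the cumulative average by fast growth) is on the order of $\frac{b-1}{b}$, not $s+\epsilon$. The paper handles this by the search function $h(x)$: it only records a ``success'' at step $t$ for $(i,j)$ once an $\exists k$ witness has been found for \emph{every} $i'\leq i$, so a deactivated block deactivates all relevant levels at once. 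The backward direction also breaks because you deactivate with $0$s. Take $x\notin P_4$ with least violating index $i_0^-$, say with $x_{i_0^-}(j,k)=1$ for all $k$ whenever $j$ is even and $=0$ otherwise, and $x_{i^-}\equiv 1$ for $i^-\neq i_0^-$. Then on the even macro blocks every $J$-level is deactivated, the constant-$0$ predictor has local noise $\approx s$ there, and since the sub-blocks dominate, the $\liminf$ in $\beta^-$ catches them, giving $\loe(\psi(x))=s$ and hence $\psi(x)\in A_1(s)$ -- the wrong answer. The paper avoids this by writing the periodic extension $\bu(b_n,p(j_n,\tfrac{1}{j_n}))$ of $u$ rather than $0$s, with the period driven to infinity as $j_n\to\infty$; Claim~\ref{u2} then guarantees that any fixed-width block function still incurs near-maximal noise on the ``deactivated'' periodic segments when $x\notin R$, so the $\liminf$ stays bounded above $s$. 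Your proposal has neither device and cannot be repaired by just enlarging the pattern space in the Borel--Cantelli step.
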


\begin{proof}
Fix $s \in [0,\frac{b-1}{b})$. Let $R=\{ x \in (2^\omega)^3 \colon 
\forall i\ \exists j_0\ \forall j \geq j_0\ \exists k\ x(i,j,k)=0\}$. 
$R$ is a $\bp^0_4$-complete set, and so it suffices to reduce $R$ to 
$A_1(s)$. Let $J\subseteq \omega$ be such that $s=\frac{b-1}{b} \left(1-\sum_{i \in J}\frac{1}{2^{i+1}}\right)$.
Let the $I_i$ partition $\omega$ as before, so $d(I_i)=\frac{1}{2^{i+1}}$.

We will define a fast growing sequence $b_0<b_1<\cdots$,
and we will also let $B_n=[b_n,b_{n+1})$. Each $n$
codes a triple $n=\langle i_n,j_n,t_n\rangle$, where $i_n,j_nt_n\leq n$. 
We will also define a certain sufficiently fast growing function $g
\colon \omega \to \omega$ (this will be the map $j \mapsto p(j,\frac{1}{j})$ 
of Claim~\ref{u2}). 
Also as in 
the proof of Lemma~\ref{a3} we will fix a particular $u \in 2^\omega$ from a 
certain $\mu$ measure one set which will guide the construction of the reduction map $\psi$. 
The construction will be similar to that of Lemma~\ref{a3}, the main difference being that 
at some points of the construction instead of copying $0$s to parts of the block 
$B_n$ we will copy a portion of $u$ repeated with period $g(j_n)$.

\begin{claim} \label{u1}
Let $\omega=A\cup B \cup C$, a disjoint union, and assume $A$, $B$, $C$ have period 
$p$ (that is, $\chi_A$, $\chi_B$, $\chi_C$  have period $p$). Let $A$, $B$, $C$ 
have densities $d_A$, $d_B$, $d_C$ respectively. Then for $\mu$ almost all $u \in 2^\omega$
(in fact, if $u$ is normal in base $b$)
we have the following. There  is an $E\in \sE$ such that 
for any $\bu$ with $\bu\res A= u\res A$ and $\bu\res B$ of period $p$, we have that 
$\upe_E(\bu)\leq d_A \frac{b-1}{b} +d_C$.
\end{claim}

\begin{proof}
Let $u$ be normal in base $b$. Let $\epsilon >0$. 
Consider sequences $s \in b^{np}$ for some integer $n$. 
For large enough $n$, the probability that a sequence $s' \in b^{A\cap np}$ will have the property that 
$A'=\{ i \in A \cap np\colon s'(i)=s'(i-p)\}$ has size at least 
$\frac{d_Apn}{b}(1-\frac{\epsilon}{3})$ is at least $1-\frac{\epsilon}{3}$. 
This follows by the argument of Lemma~\ref{tel}. We call such an $s'$ good. 
Since $u$ is normal in base $b^{d_Apn}$, it follows that for large enough $N$ 
that the number of $k \leq N$ such that $u \res (A\cap [kpn,(k+1)pn) )$ is good 
is at least $N(1-\frac{\epsilon}{2})$. For such $N$ we have that 
\begin{equation*}
\begin{split}
| \{ i \in A\cap d_A p n N & \colon u(i)=u(i-p)\}|  \geq 
N\left(1-\frac{\epsilon}{2}\right)\left(\frac{d_Apn}{b}\right)\left(1-\frac{\epsilon}{3}\right) 
\\ & \geq  \left(\frac{d_ApnN}{b}\right) (1-\epsilon). 
\end{split}
\end{equation*}

\noindent
Let $E\in \sE_{p+1}$ be the block
function such that $E(s)=s(0)$, so $E$ is simply guessing that $x(n)$
will be $x(n-p)$. It follows that if $\bu \in b^\omega$ 
is such that $\bu \res A$ is normal, then for large enough $N$ we have that 
\begin{equation*}
\begin{split}
| \{ i \leq pnN  \colon \bu(i) = E(\bu(i-p),\dots, \bu(i-1)) \} |
& \geq \left(\frac{d_ApnN}{b}\right) (1-\epsilon) + d_B pn N \\ & =pnN \left(\frac{d_A}{b}(1-\epsilon)+d_B\right).
\end{split}
\end{equation*}

\noindent
Thus, $\upe_E(\bu)\leq 1-\left(\frac{d_A}{b}(1-\epsilon)+d_B\right)= d_A\frac{b-1}{b}+d_C +\epsilon \frac{D_A}{b}$.
Since $\epsilon$ was arbitrary, $\upe_E(\bu)\leq d_A \frac{b-1}{b} +d_C$.


\end{proof}

For $u \in b^\omega$ and $p,q\in \omega$, we define $\bu(k,p)$ by $\bu(k,p)(m)=
u(m)$ for $m <k$, and for $m \geq k$ we set $\bu(k,p)(m)=u(m-k \mod p)$. 
Thus, after the first $k$ digits of $u$, we repeat the digits of $u$ periodically
with period $p$.

\begin{claim} \label{u2}
Let $j_0\in \omega$, $\epsilon,\epsilon' > 0$. Then there is a  $p=p(j_0,\epsilon) \in \omega$ which
is a power of $2$ and an $\eta=\eta(p,\epsilon')$ such that if 
$A, B, C \subseteq \omega$  are disjoint and of period $p$ with densities $d_A$, 
$d_B$, $d_C$ respectively and with $\omega=A\cup B \cup C$, then
for any $n=p\ell \geq \eta$, 
we have that $\frac{|H|}{b^{(A\cup B)\cap [0,n]}} 
\geq 1-\epsilon'$,
where $H$ is the set of $u \in b^n$ such that for any $u_c \in b^{C\cap [0, n]}$ 
if $u' \in b^n$ is defined by 
\[
u'(i) =\begin{cases}
u(i) & \text{ if } i \in A \\
\bu(0,p) & \text{ if } i \in B \\
u_c(i) & \text{ if } i \in C
\end{cases}
\]
then for any $E\in \sE_{j_0}$ we have 
\begin{equation*} 
\begin{split}
(*)\qquad  
\forall k \in [\eta,n]\ \, \Big [ \frac{1}{k}
|\{ i \leq k & \colon E(u'(i-j_0),\dots,u'(i-1)) \neq u'(i)\}| \\ & 
\geq (d_A+d_B)\frac{b-1}{b}(1 -\epsilon)-j_0 d_C \Big ].
\end{split}
\end{equation*}
Furthermore, this holds for all $p' \geq p$.
\end{claim}

\begin{proof}
Fix $j_0$, $E\in \sE_{j_0}$, $\epsilon >0$. We show that for large enough 
$p \in \omega$, and $\omega=A \cup B \cup C$, a disjoint union with $A,B,C$ 
having period $p$, and $u_c\in b^C$, that the probability a $u \in b^{(A\cup B)\cap [0,n]}$
satisfies $(*)$ is as close to $1$ as desired. 

Given $u \in b^{(A\cup B)\cap [0,n]}$, let $u''\in b^{[0,n]}$ be defined as $u'$ above
except we put $u''(i)=0$ for $i\in C \cap [0,n]$. 
The argument of Lemma~\ref{tel} shows that as $p$ grows, with probability 
approaching $1$ in choosing $u \res ((A\cup B) \cap [0,p])$ we have that 
\[
(**) \qquad \frac{1}{p} | \{ i \leq p \colon E(u''(i-j_0),\dots,u''(i-1)) \neq u''(i)\} |
\geq (d_A+d_B)\frac{b-1}{b}\left(1-\frac{\epsilon}{2}\right).
\]
From the finite version of Fubini it follows that for any $\delta>0$ that for large 
enough $p$ that with probability at least $1-\delta$ in chosing $u_b=u\res (B\cap [0,p])$
that for probability at least $1-\delta$ in choosing $u_a=u\res (A\cap [0,p])$ we have
that if $u''$ is defined from $u_a$ and $u_b$, then $(**)$ holds. 

Choose $u_b$ in this set of measure at least $1-\delta$. Let $G$ be the set of $u_a=u\res (A\cap [0,p])$
such that if $u''=u_a \cup u_b\cup u_c$ (with $u_c=0$) then $u$ satisfies $(**)$. 
So, $G$ has measure at least $1-\delta$. 
From the law of large numbers we have that the probability that a $u \in b^\omega$ 
has the property that for all $k\geq 1$ 

\[
| \{k' \leq k \colon u \res (A\cap [k'p ,(k'+1)p)) \in G\} | \geq k\left(1-\frac{\epsilon}{2}\right)
\]
is at least $1-h(\delta)$, where $h(\delta)\to 0$ as $\delta \to 0$. 
We choose $\delta< \frac{\epsilon}{2}$ small enough so that 
$h(\delta)<  \frac{\epsilon}{2}$. It follows that for all $n=\ell p \geq p$ that with probability 
at least $1-\delta$, $u_b \in b^{B\cap [0,p)}$ has the property that with 
probability at least $1-h(\delta)$, $u_A=u\res (A\cap [0,n)$ has the property that if 
$u''\in b^{[0,n)}$ is formed from $u_A$, $u_b$, $u_c=0$ as above, then 
for all $k'\leq \ell$

\begin{equation} \label{equ}
\begin{split}
| \{ i \leq k'p\colon & E(u''(i-j_0),\dots,u''(i-1)) \neq u''(i)\}| \\ & \geq 
k'p \left(1-\frac{\epsilon}{2}\right)(d_A+d_B)\left( \frac{b-1}{b}\right) \left( 1-\frac{\epsilon}{2}\right)
\\ & \geq k'p (d_A+d_B)\left( \frac{b-1}{b}\right) (1-\epsilon).
\end{split}
\end{equation}

Since $\delta, h(\delta)<\frac{\epsilon}{2}$ it follows that 
for any $n=\ell p \geq p$ that with probability at least $1-\epsilon$, 
$u \in [0,n)$ has the property that if $u''$ is obtained from $u$ and $u_c$ as above,
then for all $k'\leq \ell$ we have that Equation~(\ref{equ}) holds.
If we set $\eta(p,\epsilon)=\frac{p}{\epsilon}$, then we get the inequality of 
Equation~\ref{equ} replacing $i\leq k'p$ with $i\leq k$ for any 
$k \in [\eta,n]$.

Consider then $u' \in b^{[0,n)}$, which is defined as $u''$ except we use 
the given $u_c$ instead of the $0$ function. We have that 
$E(u''(i-j_0),\dots,u''(i-1))$ and $E(u'(i-j_0),\dots,u'(i-1))$ can only disagree 
if $C\cap [i-j_0,i) \neq \emptyset$. Thus, there can be at most 
$d_C n j_0$ many $i \in [0,n)$ where such a disagreement occurs. The inequality 
of the claim then follows.
\end{proof}

\begin{claim} \label{u2b}
For every $m, \epsilon, \epsilon'$ there is an 
$\eta=\eta(m,\epsilon,\epsilon')$ such that for any $i_0, j_0, m_0\leq m$,  
and any $p \geq 1$, for all large enough $n$ we have that 
if $\ds A=\bigcup_{\substack{i \notin J\\ i \leq m_0}}I_i \cup 
\bigcup_{\substack{i > i_0\\ i \leq m_0}} I_i$, 
$\ds B=\bigcup_{\substack{i\leq i_0\\ i \in J}}I_i$, 
and $C=\omega-(A\cup B)$, 
then $\frac{|H|}{b^n} \geq (1-\epsilon')$,
where $H$ is the set of $u \in b^n$ such that if 
\[
u'(i) =\begin{cases}
u(i) & \text{ if } i \in A\cup C \\
\bu(0,p) & \text{ if } i \in B 
\end{cases}
\]
then for any $E\in \sE_{j_0}$ we have 
\begin{equation*} \label{equ2b}
\begin{split}
\forall k \in [\eta,n]\ \, \Big [ \frac{1}{k}
|\{ i \leq k & \colon E(u'(i-j_0),\dots,u'(i-1)) \neq u'(i)\}| \\ & 
\geq d_A\left(\frac{b-1}{b}\right)(1 -\epsilon) \Big ].
\end{split}
\end{equation*}
\end{claim}

\begin{proof}
It is enough to fix $i_0$, $j_0$, $m_0$, $\epsilon$, $\epsilon'$, and $p$ and we show that for large 
for large enough $n$ the stated property holds. 
Note that there are 
$\leq b^p$ many choice for $u_B=u\res (B\cap [0,n))$. It is enough to fix
a choice for $u_B$ are show that for large enough $n$ the property holds. 
This, however, follows immediately from the argument of Lemma~\ref{tel}.

\end{proof}

From Claim~\ref{u2} we have the following. Let $b_0<b_1<\cdots <b_n <\cdots$ 
be a sufficiently fast growing sequence of powers of $2$ (exactly how fast the 
sequence needs to grow will be specified below).

\begin{claim} \label{u3}
For almost all $u \in b^\omega$ we have the following. 
For any $j_0$ and $E\in \sE_{j_0}$, 
for all large enough $n$ we have that 
if $[b_n,b_{n+1})=A\cup B$, a disjoint union
of sets of period $\leq 2^n$,  
where 
$\ds A=[b_n, b_{n+1}) \cap (\bigcup_{i \notin J} I_i \cup \bigcup_{i > i_0} I_i)$ and 
$\ds B=[b_n\cap b_{n+1}) \cap (\bigcup_{\substack{i\leq i_0\\ i \in J}}I_i)$ for some $i_0\leq n$, 
and if $j \leq n$ and $p=p(j,\frac{1}{j})$, $\eta\geq \eta(p,\frac{1}{n^4})$ 
as in claim~\ref{u2}, 
$\eta \geq \eta(n,\frac{1}{j},\frac{1}{n^4})$ as in Claim~\ref{u2b}, 
then if $u'$ is defined as in Claim~\ref{u2b}, then
we have:

(1) (large $j$ case) 
If $j \geq j_0$ then 
\begin{equation*}
\begin{split}
\forall k \in [b_n+\eta,b_{n+1}]\ \, \Big [
\frac{1}{k-b_n} \, | \{ i \in [b_n,k) & \colon E(u'(i-j),\dots,u'(i-1)) \neq u'(i)\} |
\\ & \geq  (d_A+d_B)\left(\frac{b-1}{b}\right)\left(1-\frac{1}{j}\right) - \frac{j}{2^n} \Big ].
\end{split}
\end{equation*}

(2) (general case) 
\begin{equation*}
\begin{split}
\forall k \in [b_n+\eta,b_{n+1}] \ \, \Big [ \frac{1}{k-b_n}
|\{ i \in [b_n,k) & \colon E(u'(i-j),\dots,u'(i-1)) \neq u'(i)\} |
\\ & \geq d_A\left(\frac{b-1}{b}\right)\left(1-\frac{1}{n}\right) \Big ].
\end{split}
\end{equation*}
where $d_A$, $d_B$ are the densities of $A$, $B$ in $[b_n,b_{n+1})$.
\end{claim}

\begin{proof}
It is enough to fix $j_0$, $E \in \sE_{j_0}$, and show that almost all $u$
have the desired property for these values. 
By Borel-Cantelli it is enough to show that the probability that $u \res [b_b,b_{n+1})$
satisfies the conclusion of the claim it at least $1-\frac{1}{n^2}$. Fix $b_n$,
and we show, assuming $b_{n+1}$ is sufficiently large compared to $b_n$, that 
the interval $[b_b,b_{n+1})$ has this property. 
There are at most $n$ many partitions $[b_n,b_{n+1})=A\cup B$ of the type 
stated in the claim (since the choice of $A,B$ is determined by $i_0\leq n$), 
so it enough to fix $A,B$ and show that with probability at least 
$1-\frac{1}{n^3}$ in choosing $u\res [b_n,b_{n+1})$ the statement of the claim holds. 
Note that the $j$ in the claim satisfy $j\leq n$, so there is a bound $\eta_n$ 
depending only on $n$, such that if $j \leq n$, $p=p(j,\frac{1}{j})$, then 
$\eta(p, \frac{1}{n^3})\leq \eta_n$. We will assume that $b_n \gg \eta_n$ for all $n$,
and in particular we will choose $b_{n+1}$ so that $\frac{\eta_{n+1}}{b_{n+1}-b_n} <\frac{1}{n}$. 
This is possible as $\eta_{n+1}$ depends only on $n+1$ and not the value of $b_{n+1}$. 
Similarly we may fix $j \leq n$ and show that with probability at least 
$1-\frac{1}{n^4}$ in choosing $u \res [b_n,b_{n+1})$ we satisfy the claim. 
In case (1), that is, $j\geq j_0$, the conclusion follows immediately from 
Claim~\ref{u2}, assuming that $b_{n+1}> b_n+\eta_n$. 
In applying Claim~\ref{u2} we use $C=\bigcup_{i>n}I_i$.
In case (2),
the conclusion follows immediately from Claim~\ref{u2b}, assuming again that 
$b_{n+1}$ is sufficiently large compared to $b_n$ (the interval $[0,n)$
of Claim~\ref{u2b} becomes $[b_n,b_{n+1})$ here). 

\end{proof}

We now fix $u \in 2^\omega$ in the measure one set described in Claim~\ref{u3}.
We also fix the fast growing sequence $b_0<b_1<\cdots <b_n<b_{n+1}<\cdots$. 
As we said in Claim~\ref{u3}, we take $b_{n+1}>b_n+\eta_n$,
where $\eta_n$ is the maximum of $\eta(p,\frac{1}{n^4})$, where $p=p(n,\frac{1}{n})$,
from Claim~\ref{u2} and $\eta(n, \frac{1}{n},\frac{1}{n^4})$ from Claim~\ref{u2b}.

Claim~\ref{u3} then gives the following property of $u$ and the $b_n$. 
\medskip

$(\dagger)$: For any $j_0$, $E\in \sE_{j_0}$,
for all large enough $n$ if $[b_n,b_{n+1})=A\cup B$ where 
$A=[b_n,b_{n+1}) \cap(\bigcup_{i \notin J} I_i \bigcup_{\substack{i\in J\\ i >i_0}} I_i)$,
$B=[b_n,b_{n+1})\cap (\bigcup_{\substack{i \in J\\ i \leq i_0}} I_i)$ where $i_0 \leq n$, 
then if $p=p(j,\frac{1}{j})$ (as in Claim~\ref{u2}) and $u'$
is defined from $u$ and $p$ as in Claim~\ref{u2b}, then for any $j \leq n$  
if we set 
\[
d=\frac{1}{k-b_n}
|\{ i \in [b_n,k)  \colon E(u'(i-j),\dots,u'(i-1)) \neq u'(i)\} |
\]
\noindent
then for $b_n+\eta_n\leq k \leq b_{n+1}$ we have:
\begin{enumerate}
\item
If $j \geq j_0$ then $d\geq (d_A+d_B)\left(\frac{b-1}{b}\right)\left(1-\frac{1}{j}\right)- \frac{j}{2^n}$.
\item
$d\geq d_A\left(\frac{b-1}{b}\right)\left(1-\frac{1}{n}\right)$.
\end{enumerate}
\medskip

We now return to the proof of Lemma~\ref{inflower}.

Given $x \in 2^{\omega^3}$, we define a function $h(x)\colon \omega^3 \to \omega^3$ as follows. 
We let $h(x)(i,j,0)$ be $(1,j+1,0)$ if $x(i',j,0)=0$ for all $i' \leq i$. 
Otherwise, set $h(x)(i,j,0)=(0,j,1)$. In general, we define 
\begin{equation*}
h(x)(i,j,t)=
\begin{cases}
(1,  (h(x)(i,j,t-1)))_1+1,0) & \text{ if } \\ 
{} \qquad   {} 
\forall i'\leq i\ \exists t' \leq t\ x(i',(h(x)(i,j,t-1))_1,t'))_1=0 & \\ 
(0, (h(x)(i,j,t-1))_1,(h(x)(i,j,t-1))_2+1) & \text{ otherwise}
\end{cases}
\end{equation*}

The function $h$ does the following. The input $i$ sets the ``width'' of the search,
that is, it will search over the $(x)_{i'}$ for $i' \leq i$. The input $j$ 
sets the initial start of the search in that the search will begin at the 
$x(i',j,0)$. The search checks to see if all of these are equal to $0$. If so,
it will output $(h(x)(i,j,0))_0=1$, denoting a successful search, and then 
replace $j$ with $j+1$ and begin a new search at $x(i,j+1,0)$. 
The output $h(x)(i,j,0))_1$ records the new value $j+1$, and the 
output $h(x)(i,j,0))_2$ records the new ``height'' of the search, will in this case
is set back to $0$. 
If not all the values $x(i',j,0)$ are $0$, then the $j$ value remains the same and
we increment the height of the search. The means we will search the values 
$x(i',j,k')$ where $i'\leq i$, $j$ is the current value of $(h(x)(i,j,t))_1$,
and $k'\leq (h(x)(i,j,t))_2$, which is the current height. For a given $j$,
we keep incrementing the height $k$ and see if 
$\forall i'\leq i\ \exists k'\leq k\  x(i',j,k')=0$. If so, the search is successful,
and we then increment $j$ to $j+1$ and reset the height $k$ to $0$. Otherwise,
we continue to increment the height $k$ and continue the search.

The search is attempting to verify,
step by step, that 
\[
\forall i'\leq i\ \forall j' \geq j\ \exists k'\ x(i',j',k')=0.
\]
If this condition holds for some $i,j$, then $(h(x)(i,j,t))_1$ will tend to $\infty$ with $t$. 
If this condition fails for $i,j$, then for $j'$ the least integer $\geq j$ 
such that $\neg \forall i' \leq i\ \exists k\ x(i',j',k)=0$
we have that $(h(x)(i,j,t))_1$ will be equal to $j'$ for all large enough $t$
(the search will ``get stuck'' at $j'$).

Recall that $\{ b_n\}$ is a sufficiently fast growing sequence, so that $u$ 
and the $\{ b_n\}$ satisfy Claim~\ref{u3}. 
As before, we let $B_n=[b_n,b_{n+1})$.
We view $n$ as coding a triple of integers which we write as $(i_n,j_n,t_n)$. 
We define the map $\psi \colon 2^{\omega^3} \to 2^\omega$ as follows. Let $x \in 2^{\omega^3}$. 
Recall the $I_i$ are the pairwise disjoint arithmetical sequences of Lemma~\ref{a3},
so $I_i$ has density $\frac{1}{2^{i+1}}$. Also,  $J \subseteq \omega$ is 
such that  $\frac{b-1}{b}(1-\sum_{i \in J} \frac{1}{2^{i+1}})=s$. For $i \notin J$, we will
just copy $u$ to $I_i$. It remains to specify $\psi(x)(m)$ for $m \in I_i$ where $i\in J$.

For $m \in B_n \cap I_i$, where $i \in J$, we define $\psi(x)(m)$ through the following cases
(the definition of $\bu(k,p)$ is given right before Claim~\ref{u2}).

\begin{enumerate}
\item
If $i \notin J$, we set $\psi(x)(m)=u(m)$.
\item
If $i \in J$ and $m \in B_n$, we set $\psi(x)(m)=u(m)$ if $i_n <i$. 
\item
If $i\in J$, $m \in B_n$, and $i_n \geq i$, then if $(h(x)(i_n,j_n,t_n))_0
=0$ (unsuccessful search at step $t_n$ for $(i_n,j_n)$) we set $\psi(x)(m)=u(m)$.
\item
If $i\in J$, $m \in B_n$, $i_n \geq i$, and $(h(x)(i_n,j_n,t_n))_0 =1$ (successful search
at step $t_n$), we set $\psi(x)(m)=\bu (b_n,p(j_n,\frac{1}{j_n}))(m)$, where 
$p(j_n,\frac{1}{j_n})$ is defined in Claim~\ref{u2}. 
\end{enumerate}

We show that $\psi$ is a reduction from the $\bp^0_4$ set $R$ to the set 
\[
A_1(s)=\{ z \in b^\omega \colon \loe(z) \leq s\}.
\]

First assume that $x \in R$. Fix $\epsilon >0$. Let $i_0$ be large enough so that 
\[
\displaystyle \frac{b-1}{b} (\sum_{\substack{i \notin J\\ i\leq i_0}} d_n)+
\sum_{i >i_0}d_n <s+\frac{\epsilon}{2},
\]

\noindent
where $d_i=\frac{1}{2^{i+1}}$ is the density of $I_i$. 
Let $j_0$ be large enough so that $\forall i\leq i_0\ \forall j \geq j_0\ 
\exists k\ x(i,j,k)=0$. We can do this as $x \in R$. 
Let $A \subseteq \omega$
be given by 
\[
n \in A \Leftrightarrow (i_n=i_0 \wedge j_n=j_0) \wedge 
( (h(x)(i_n,j_n,t_n))_0=1),
\]
that is, the $(i_n,j_n)$ search at step $t_n$ is successful.
From the definition of $j_0$ and the properties of $h(x)$ 
we have that $A$ is infinite (that is, there are infinitely many $t$ such that 
$(h(x)(i_0,j_0,t))_0=1$). For any $n \in A$, $i \leq i_0$ in $J$, and 
$m \in B_n \cap I_i$, we have that $\psi(x)(m)=\bu(b_n, p(j_0, \frac{1}{j_0}))$.
It follows from Claim~\ref{u1}, and the fact that the $b_n$ grow sufficiently fast, 
that there is an $E\in \sE$ (with say $E\in \sE_r$) such that for all 
large enough $n \in A$ we have that 
\begin{equation*}
\begin{split}
& \frac{1}{|B_n|} 
|\{ m \in B_n \colon 
E(\psi(x)(m-r),\dots,\psi(x)(m-1))\neq \psi(x)(m) \} |
\\ & \qquad\leq \frac{b-1}{b}(1-\sum_{\substack{i\in J\\ i \leq i_0}} d_n)+
\frac{\epsilon}{2} <s+\epsilon.
\end{split}
\end{equation*}

\noindent
Since $\epsilon>0$ was arbitrary, we have that $\loe(\psi(x)) \leq s$, that is,
$\psi(x)\in A_1$.

Assume next that $x \notin R$. Let $i_0$ be least so that $\forall j\ \exists j' \geq j\ 
\forall k\ x(i,j',k)=1$. So, for any $j$, $(h(x)(i_0,j,t))_1$ has a limiting value $j' \geq j$
as $t$ goes to infinity (i.e., the width $i_0$ search starting at $j$ will always gets stuck). 
Note that if $i_1 >i_0$, then $(h(x)(i_1,j,t))_1$ will reach its limiting value 
at or before when $(h(x)(i_0,j,t))_1$ does, that is, the $h(x)(i_1,j,t)$ search will 
get stuck at or before when $h(x)(i_0,j,t)$ gets stuck. So, for all $j$ we have that for all
sufficiently large $n$ with $i_n \geq i_0$, and $j_n=j$, that 
$(h(x)(i_n,j_n,t_n))_0=0$ (unsuccessful search at step $t_n$).

Consider $\psi(x)\res I_i$ where $i\geq i_0$.
So, for each $j$ we have that for all large enough $n$ with $j_n=j$ 
that either $i_n < i$, in which case $\psi(x)\res (B_n \cap I_i)=
u\res (B_n \cap I_{i})$, or else $i_n \geq i$, in which case  (since 
$(h(i_n,j,t_n))_0=0$) we also have that 
$\psi(x)\res (B_n \cap I_{i})=u\res (B_n \cap I_{i})$. 
Thus, for any $j$ we have that for large enough $n$ that 
$\psi(x)\res (B_n \cap I_{i})$ is either of the form $u\res (B_n \cap I_{i})$
or else of the form $\bu(b_n,p)$ where $p >j$. That is, the periods $p$ 
used in truncating $u$ in the block $B_n$ go to infinity with $n$.

It follows from $(\dagger)$ (where the $i_0$ there is the current $i_n$) 
that for any $\epsilon >0$ and any $E\in \sE_{j_0}$ that for all
large enough $n$ that in either case, $i_n <i_0$ or $i_n >i_0$ (in which case 
$j_n \geq j_0$ for large enough $n$), we have that for all $k$ with $b_n+\eta_n \leq k \leq b_{n+1}$ that
if

\[
d=\frac{1}{k-b_n}
|\{ m \in [b_n,k)  \colon E(\psi(x)(m-j),\dots,\psi(x)(m-1)) \neq \psi(x)(m)\} |
\]
\noindent
then

\begin{equation*}
\begin{split}
d  & \geq \left( \frac{b-1}{b} \right) \min \left\{ \left(1-\frac{1}{j}-\frac{j}{2^n}\right), 
\left(1-\frac{1}{n}\right)\left(\sum_{i\notin J} \frac{1}{2^{i+1}} +\sum_{\substack{i \in J\\ i>i_0}} \frac{1}{2^{i+1}}\right)
\right\}
\end{split}
\end{equation*}
\noindent
for any $j$ and all large enough $n$. 
For any $\epsilon>0$ we have for all large enough $n$ that 
$$d \geq \left( \frac{b-1}{b}\right)
\left(\sum_{i\notin J} \frac{1}{2^{i+1}} +\sum_{\substack{i \in J\\ i>i_0}} \frac{1}{2^{i+1}}\right)-\epsilon
\geq (s-\epsilon)+ \left(\frac{b-1}{b}\right)\sum_{\substack{i \in J\\ i>i_0}} \frac{1}{2^{i+1}}.$$ 

\noindent
We may assume the $b_n$ grow sufficiently fast so that $\frac{\eta_n}{b_{n-1}} \to 0$ 
and $\frac{\sum_{m<n} b_m}{b_n} \to 0$
as $n \to \infty$ (note here that $\eta_n$ is defined independently of $b_n$).
Then for large enough $n$ we have the above inequality for $d$ holds using 
\[
d'=\frac{1}{k} |\{ m \in [0,k)  \colon E(\psi(x)(m-j),\dots,\psi(x)(m-1)) \neq \psi(x)(m)\} |
\]
\noindent 
for all large enough $k$.  
Thus, $\loe(\psi(x)) \geq s+  (\frac{b-1}{b})(\sum_{\substack{i \in J\\ i>i_0}} \frac{1}{2^{i+1}})$,
and so $\psi(x) \notin A_1(s)$.

\end{proof}

\section{Hausdorff dimension of real numbers with noise $s$}

For each $s \in \bra{0,\frac{b-1}{b}}$, we know that $\mathcal{N}(b) \subset A_2(s)$ and $\mathcal{N}(b) \subset A_4(s)$ which implies $\dim_H(A_2(s)) = \dim_H(A_4(s)) = 1$. Since $A_1(s)$ and $A_3(s)$ do not contain $\mathcal{N}(b)$ we must introduce the following machinery to compute the Hausdorff dimension of these sets.

For $x \in b^\omega$ let $M(x)$ be the set of weak-$*$ limit points of the sequence of measures $\mu_{x,n} = \frac{1}{n} \sum_{k=1}^{n} \delta_{T^k x}$ where $T(.E_1 E_2 E_3 \cdots) = .E_2 E_3 \cdots$ is the shift on $b^\omega$. This is a closed convex subset of the shift-invariant probability measures on $b^\omega$ which we denote by $\MM(b^\omega)$. We say a point $x \in b^\omega$ is generic for a measure $\mu$ if $M(x) = \{\mu\}$. For a closed convex subset $M \subseteq \MM(b^\omega)$ define $\GG(M)$ to be the set of $x \in b^\omega$ such that $M(x) = M$. Note $\GG(\{\mu\})$ is the set of generic points for $\mu$. Recall the measure theoretic entropy of a shift-invariant measure $\mu$ on $b^\omega$ is
\begin{align*}
h(\mu) & = \lim_{N \to \infty} \frac{1}{N} \sum_{B \in b^N} -\mu[B] \log \mu[B].
\end{align*}
Colebrook proved  the following result in \cite{Colebrook}.
\begin{thm}
The Hausdorff dimension of $\GG(M)$ is $\sup_{\mu \in M} \frac{h(\mu)}{\log b}$.
\end{thm}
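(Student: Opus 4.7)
The plan is to prove matching upper and lower bounds for $\dim_H(\GG(M))$, following the classical saturated-set / multifractal approach. Set $s^{*} = \sup_{\mu \in M} h(\mu)/\log b$.

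For the upper bound $\dim_H(\GG(M)) \leq s^{*}$, fix $\epsilon > 0$. Using upper semi-continuity of the block-entropy functionals $\mu \mapsto \frac{1}{N \log b} \sum_{B \in b^N} -\mu[B] \log \mu[B]$ together with compactness of $M$, I would pick $N$ large enough that this functional is bounded above by $s^{*} + \epsilon$ uniformly on $M$. For any $x \in \GG(M)$, the condition $M(x) = M$ implies $\mathrm{dist}(\mu_{x,n}, M) \to 0$, so for all sufficiently large $n$ the empirical $N$-block distribution of $x \res n$ lies within $\delta$ of the $N$-marginal of some $\mu \in M$. A standard Stirling/Sanov type count bounds the number of length-$n$ strings with this property by $b^{n(s^{*} + 2\epsilon)}$ once $\delta$ is small; covering $\GG(M)$ by the associated cylinders of diameter $b^{-n}$ and letting $n \to \infty$, then $\epsilon \to 0$, gives the upper bound.

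For the lower bound, fix $\mu^{*} \in M$ with $h(\mu^{*}) \geq \sup h - \epsilon$ and choose a countable dense sequence $(\mu_k)$ in $M$. I would construct $x = w_1 w_2 w_3 \cdots$ as a concatenation of blocks $w_k$ of lengths $L_k$, where $L_k$ grows so fast that $(L_1 + \cdots + L_{k-1})/L_k \to 0$; the block $w_k$ is drawn uniformly from the set of length-$L_k$ strings whose empirical $N_k$-block frequencies are $\delta_k$-close to those of $\mu_{i_k}$, with $N_k \to \infty$, $\delta_k \to 0$, and an index sequence $(i_k)$ chosen to visit every fixed $j$ infinitely often while having asymptotic density $1$ of indices on which $\mu_{i_k} = \mu^{*}$. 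Let $\nu$ be the resulting measure on the space of such $x$. By the Shannon--McMillan--Breiman theorem in its form counting approximately typical blocks, the cardinality of the allowed $w_k$ is $\exp(L_k h(\mu_{i_k}) (1 + o(1)))$, so $\nu[w_1 \cdots w_k] \leq b^{-L_k(h(\mu_{i_k})/\log b - \epsilon)}$; combined with $L_k/(L_1 + \cdots + L_k) \to 1$ this forces $\liminf_n \frac{-\log \nu[x \res n]}{n \log b} \geq h(\mu^{*})/\log b - 2\epsilon$ for $\nu$-almost every $x$, and the mass distribution principle then yields the matching lower bound, provided $\nu$-almost every $x$ actually belongs to $\GG(M)$. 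The inclusion $M \subseteq M(x)$ is ensured because $\mu_{x, N_k}$ is approximately $\mu_{i_k}$ (by dominance of the last block) while $(\mu_k)$ is dense in $M$; the reverse inclusion $M(x) \subseteq M$ follows from convexity of $M$, since at every intermediate $n$ the empirical measure $\mu_{x,n}$ is approximately a convex combination of $\mu_{i_1}, \dots, \mu_{i_k}$.

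The main obstacle is the delicate bookkeeping in the lower bound: one must balance the requirement that $L_k$ grow fast enough to force the $k$-th block to dominate the running empirical measure (securing $M \subseteq M(x)$) against the requirement that the transition regions between blocks not introduce spurious limit measures outside $M$ (securing $M(x) \subseteq M$). The convexity hypothesis on $M$ is used precisely here, since it guarantees that the running empirical measures, being approximately convex combinations of the $\mu_{i_k}$, remain inside $M$; without convexity the construction could force $\GG(M)$ to be empty. A secondary technical point is justifying the typical-block counts uniformly in $\mu$ near $\mu_{i_k}$ when $\mu_{i_k}$ is not necessarily ergodic; this is handled by tuning $N_k$ to grow slowly relative to $L_k$ and $\delta_k$ to shrink to $0$ stage by stage, so that the counting asymptotic $\exp(L_k h(\mu_{i_k})(1 + o(1)))$ holds at each stage.
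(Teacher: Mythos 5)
The paper never proves this statement---it is quoted from Colebrook's paper---so there is no in-house argument to compare your route against; I can only assess the proposal on its own terms. Your upper bound is sound (indeed it proves the stronger fact $\dim_H\{x : M(x)\subseteq M\}\leq s^{*}$), but the lower bound has a fatal gap, and it is not a bookkeeping issue. At the end of the $k$-th block, $-\log\nu[w_1\cdots w_k]=\sum_{m\leq k}\log(\#\{\text{admissible }w_m\})\leq (L_1+\cdots+L_{k-1})\log b+L_k\,h(\mu_{i_k})(1+o(1))$. Since you require $L_k$ to dominate $L_1+\cdots+L_{k-1}$ (you need this so that $\mu_{x,n}\approx\mu_{i_k}$ at those times, securing $M\subseteq M(x)$), the ratio $\frac{-\log\nu[x\res n]}{n\log b}$ at $n=L_1+\cdots+L_k$ is approximately $h(\mu_{i_k})/\log b$. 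Because $(i_k)$ must visit every index infinitely often, the liminf over all $n$ is at most $\inf_{\mu\in M}h(\mu)/\log b$, not $h(\mu^{*})/\log b$; having density one of $\mu^{*}$-blocks does not help, since the liminf is decided by the worst infinitely-recurring scales. The mass distribution principle therefore yields only the infimum.

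This failure is forced, not an artifact of your construction. Fix any $\mu_0\in M$. Every $x\in\GG(M)$ has $\mu_0\in M(x)$, so $\mu_{x,n}$ is $\delta$-close to $\mu_0$ for infinitely many $n$; your own type-counting covering argument, run at exactly those scales with the single measure $\mu_0$ in place of $M$, gives $\mathcal{H}^{t}(\GG(M))=0$ for every $t>h(\mu_0)/\log b$. Hence $\dim_H\GG(M)\leq\inf_{\mu\in M}h(\mu)/\log b$ (Bowen's bound on quasi-generic points), and when $M$ contains measures of different entropies the formula with the supremum is false as stated: for $M$ the convex hull of $\{\delta_0,\lambda\}$ one gets $\dim_H\GG(M)=0$, not $1$. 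The supremum formula is the correct answer for the larger saturated set $\{x : M(x)\subseteq M\}$ (which is what your upper bound actually controls, and is the form in which the Colebrook--Cajar result holds), whereas for $\GG(M)=\{x : M(x)=M\}$ the correct value is the infimum (Pfister--Sullivan). So no repair of the block construction can establish the statement as written; the statement itself, and the paper's subsequent use of it to conclude $\dim_H(A_1(s))=\dim_H(L(s))=1$, needs to be revisited.
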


In an analogous way to real numbers we can define the noise of a measure $\mu$ as 
\begin{align*}
\beta(\mu) = 1 - \lim_{\ell \to \infty}\sum_{B \in b^\ell} \mu[B] \max_{0 \leq d \leq b-1} \frac{\mu[d \conc B]}{\mu[B]}.
\end{align*}
It is clear that the upper and lower noise of each element in $\GG(M)$ is $\overline{\beta}(M) : = \sup_{\mu \in M} \beta(\mu)$ and $\underline{\beta}(M): = \inf_{\mu \in M} \beta(\mu)$ respectively. In general
\begin{align*}
\overline{\beta}(x) &= \overline{\beta}(M(x)) \\
\underline{\beta}(x) & = \underline{\beta}(M(x)).
\end{align*}

Thus
\begin{align*}
A_1(s) &= \bigcup_{\substack{M \subseteq \MM(b^\omega) \\ M\text{ is closed and convex} \\ \beta^{-}(M) \leq s }} \GG(M) \ \ \ \ \ \ \ \ A_2(s)= \bigcup_{\substack{M \subseteq \MM(b^\omega) \\ M\text{ is closed and convex} \\ \beta^{-}(M) \geq s }} \GG(M) \\
A_3(s) &= \bigcup_{\substack{M \subseteq \MM(b^\omega) \\ M\text{ is closed and convex} \\ \beta^+(M) \leq s }} \GG(M)  \ \ \ \ \ \ \ \ A_4(s) = \bigcup_{\substack{M \subseteq \MM(b^\omega) \\ M\text{ is closed and convex} \\ \beta^+(M) \geq s }} \GG(M).
\end{align*}
Furthermore
\begin{align*}
U(s) & = \bigcup_{\substack{M \subseteq \MM(b^\omega) \\ M\text{ is closed and convex} \\ \beta^{+}(M) = s }} \GG(M)  \ \ \ \ \ \ \ \ L(s) = \bigcup_{\substack{M \subseteq \MM(b^\omega) \\ M\text{ is closed convex} \\ \beta^{-}(M) = s }} \GG(M).
\end{align*}
Now consider $\lambda$ the uniform measure on $b^\omega$ and $\delta_0$ the Dirac point mass at $(0,0,\cdots)$. Let $M$ be the convex hull of $\{\delta_0,\lambda\}$. Then $M$ is a closed convex subset of $\MM(b^\omega)$ and we have $\beta^-(M) = \beta(\delta_0) = 0$. Thus $\GG(M) \subset L(s) \subset A_1(s)$ and $\dim_H(L(s))  \geq \dim_H(\GG(M)) = \frac{h(\lambda)}{\log b} = 1$ which implies $\dim_H(L(s)) = \dim_H(A_1(s)) = 1$.

We also have the following lower bound on $\dim_H A_3(s)$
$$
\dim_H A_3(s) \geq \frac{1}{\log b} \sup_{\mu : \beta(\mu) \leq s} h(\mu).
$$
Note the same lower bound holds for $\dim_H U(s)$ since
$$
\dim_H U(s) \geq \frac{1}{\log b} \sup_{\mu : \beta(\mu) = s} h(\mu) \geq \frac{1}{\log b} \sup_{\mu : \beta(\mu) \leq s} h(\mu).
$$
This second inequality follows since if $\beta(\mu) < s$ we can find $t \in [0,1]$ such that $\beta(t\mu+(1-t)\lambda) = s$ where $\lambda$ is the uniform measure on $b^\omega$ and $h(t\mu + (1-t) \lambda) \geq h(\mu)$ since $\lambda$ is the measure of maximal entropy.
We have that $\beta(\mu) = 0$ is equivalent to $h(\mu) = 0$ and $\beta(\mu) = \frac{b-1}{b}$ is equivalent to $h(\mu) = \log b$ but we do not have a general expression for
\begin{align*}
\sup_{\mu : \beta(\mu) \leq s} h(\mu).
\end{align*}
One approach to finding this supremum is to restrict our attention to measures which are $k$-step Markov, that is measures of the form
\begin{align*}
\mu[b_1, b_2, \cdots, b_{\ell}] & = \rho(b_1, \cdots, b_k) \prod_{i=1}^{\ell - k} P_{b_i \cdots b_{i+k-1}, b_{i+1} \cdots b_{i+k}}
\end{align*}
where $\rho$ is a probability distribution on $b^k$ (which we view as a $1 \times b^k$ matrix) and $P$ is a $b^k \times b^k$ matrix of non-negative real numbers such that
\begin{align*}
&\sum_{B' \in b^k} P_{B,B'} = 1 \text{ for all } B \in b^k\\
&\rho P = \rho \\
& P(B,B')>0 \Rightarrow B_i = B'_{i-1} \text{ for } 2 \leq i \leq k.
\end{align*}
A $k$-step Markov chain $\mu$ with stationary distribution $\rho$ and transition matrix $P$ has entropy
\begin{align*}
h(\mu) =: h_k(\rho,P) = \sum_{B \in b^k} \rho(B) \sum_{B' \in b^k} - P_{B,B'} \log P_{B,B'}
\end{align*}
and noise
\begin{align*}
\beta(\mu) =: \beta_k(\rho,P) &= 1 - \sum_{B \in b^k} \mu[B] \max_{0 \leq d \leq b-1} \frac{\mu[d \conc B]}{\mu[B]} \\
&= 1 - \sum_{B \in b^k} \mu[B] P_{d b_1 \cdots b_{k-1}, b_1 b_2 \cdots b_k}.
\end{align*}
Thus $\sup_{(\rho, P) : \beta_k(\rho,P) \leq s} h_k(\rho,P) \leq \sup_{\mu : \overline{\beta}(\mu) \leq s} h(\mu)$. If we can compute this supremum over all stochastic matrices $P$ with steady state $\rho$ we have improved lower bounds on $\dim_H(U(s))$. Problems of this type are unfortunately quite difficult in general. This problem is tractable for small $k$ however, and we now consider an easy special case.
\begin{lem}
For $s \in \bra{0, \frac{b-1}{b}}$
\begin{align*}
\dim_H(A_3(s)) \geq \frac{1}{\log b} H(s) + s \frac{\log(b-1)}{\log b}
\end{align*}
where $H(s) = -s\log s - (1-s) \log (1-s)$.
\end{lem}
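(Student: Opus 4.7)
The plan is to invoke the general lower bound
$$\dim_H A_3(s) \geq \frac{1}{\log b}\sup_{\mu : \beta(\mu) \leq s} h(\mu)$$
recorded earlier in the section and exhibit a single shift-invariant measure that attains the target value on the right. The natural candidate is the Bernoulli (i.e.\ iid product) measure $\mu$ on $b^\omega$ with one-dimensional marginal
$$p_0 = 1-s, \qquad p_1 = p_2 = \dots = p_{b-1} = \frac{s}{b-1}.$$
This is a degenerate case of the $k$-step Markov framework discussed above (take $k=1$, transition matrix with identical rows $P_{B,B'}=p_{(B')_1}$, stationary distribution $\rho(B)=\prod_i p_{B_i}$), so it is in the class over which the supremum is taken.

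The verification is then a direct computation. Because of the product structure, $\mu[d\conc B]/\mu[B] = p_d$ is independent of the block $B$, so for every $\ell$
$$\sum_{B \in b^\ell}\mu[B]\max_{0 \le d \le b-1}\frac{\mu[d\conc B]}{\mu[B]} \;=\; \max_{0 \le d \le b-1} p_d \;=\; 1-s,$$
and therefore $\beta(\mu) = 1 - (1-s) = s$, certifying $\mu$ as admissible in the supremum. The entropy of the iid measure is
$$h(\mu) \;=\; -\sum_{i=0}^{b-1} p_i \log p_i \;=\; -(1-s)\log(1-s) - s\log\frac{s}{b-1} \;=\; H(s) + s\log(b-1),$$
after collecting the $b-1$ equal terms whose probabilities sum to $s$. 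Dividing by $\log b$ gives precisely the stated lower bound.

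There is essentially no obstacle here beyond bookkeeping. The only conceptual point is recognizing that for this lower bound one need not go beyond Bernoulli measures at all, and that, given the hard constraint $\max_d p_d = 1-s$ forced by $\beta(\mu)\le s$ in the iid case, spreading the remaining mass $s$ uniformly across the other $b-1$ symbols maximizes $h$ by concavity of $-x\log x$. (Allowing general $k$-step Markov measures presumably gives sharper bounds for larger $k$, but this is exactly the ``easy special case'' flagged in the preceding paragraph.)
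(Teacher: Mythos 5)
Your proposal is correct and takes essentially the same approach as the paper: both reduce to the $k=1$ Bernoulli case and use the measure with $p_0 = 1-s$ and $p_d = s/(b-1)$ for $d\geq 1$. The only cosmetic difference is that the paper derives this measure as the solution of the constrained entropy-maximization over Bernoulli measures, while you exhibit it directly and note the concavity argument in passing; since only a lower bound is needed, both are equally valid.
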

\begin{proof}
For $k=1$ and $P$ with identical rows (so we now write $P_j$ instead of $P_{i,j}$) we have the associated Markov measure $\mu$ is Bernoulli which implies
\begin{align*}
h(\mu) &= - \sum_{d =0}^{b-1} P_d \log P_d \\
\beta(\mu) &= 1 - \max_{0 \leq d \leq b-1} P_d.
\end{align*}
Now if $\beta(\mu) \leq s$ then $\max_{0 \leq d \leq b-1} P_d \geq 1 - s$ and we can without of loss of generality take this maximum to be at $d=0$. Therefore
\begin{align*}
\dim_H A_3(s) &\geq \sup_{\substack{P: P_0 \geq 1-s \\P_d \leq P_0 \forall 0 \leq d \leq b-1 \\ \sum_d P_d =1}} - \sum_{d=0}^{b-1} P_d \log P_d.
\end{align*}
Clearly this supremum is attained when $P_0 = 1-s$ since $1 - s \geq \frac{1}{b}$.  Thus, we must maximize $- \sum_{d =0 }^{b-1} P_d \log P_d$ subject to the constraints $P_0 = 1-s$, $0 \leq P_d \leq 1-s$ and $\sum_{d=0}^{b-1} P_d = 1$, which is equivalent to maximizing $- \sum_{d=1}^{b-1} P_d \log P_d$ subject to the constraints $0 \leq P_d \leq 1-s$ and $\sum_{d=1}^{b-1} P_d = s$. The maximizer of $-\sum_{d=1}^{b-1} P_d \log P_d$ subject to the constraint $\sum_{d=1}^{b-1} P_d = s$ occurs when $P_d = \frac{s}{b-1}$. Since $s \leq \frac{b-1}{b}$ we have $\frac{s}{b-1} \leq 1- s$, so for each $1 \leq d \leq b-1$ we have $0 \leq P_d \leq 1-s$. Thus we have
\begin{align*}
\dim_H(A_3(s)) &\geq  \frac{1}{\log b} \pr{-(1-s) \log (1-s) - \sum_{d=1}^{b-1} \frac{s}{b-1} \log \frac{s}{b-1}}\\
&= \frac{1}{\log b}\pr{- (1-s)\log(1-s) - s \log \frac{s}{b-1}}\\
& = \frac{1}{\log b} H(s) + s \frac{\log (b-1)}{\log b}.
\end{align*}
\end{proof}

In order to obtain an upper bound on $\dim_H(A_3(s))$ we use the argument of M. Bernay \cite{Bernay} which showed $\dim_H \mathcal{N}^\perp(b) = 0$. To do this define the sets
\begin{align*}
A(N_0,\ell,s,\epsilon) = \bigcap_{N \geq N_0} \bigcup_{E \in \mathcal{E}_\ell} \br{\omega \in [0,1) : \sum_{n<N} \inf \{1, |\omega_n - E(\omega_{n+1},\cdots, \omega_{n+\ell})|\} \leq N(s+\epsilon)}
\end{align*}
and note
\begin{align*}
A_3(s) = \bigcap_{\epsilon > 0} \bigcup_{\ell = 1}^\infty \bigcup_{N_0 = 1}^\infty A(N_0,\ell,s,\epsilon).
\end{align*}
M. Bernay proved the following lemma.
\begin{lem}
For $s \in \bra{0,\frac{b-1}{b}}$
$$
\dim_H A(N_0,\ell,s,\epsilon) \leq \frac{1}{\log b} H(s+\epsilon) + s+\epsilon.
$$
\end{lem}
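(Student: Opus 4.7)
The plan is to fix $\tau > \tfrac{1}{\log b} H(s+\epsilon) + (s+\epsilon)$ and show that the $\tau$-dimensional Hausdorff outer measure of $A(N_0,\ell,s,\epsilon)$ is zero. Since $A(N_0,\ell,s,\epsilon) \subseteq \bigcup_{E \in \sE_\ell}\{\omega : \sum_{n<N}\inf\{1,|\omega_n - E(\omega_{n+1},\dots,\omega_{n+\ell})|\} \leq N(s+\epsilon)\}$ for every $N \geq N_0$, it suffices, for each large $N$, to produce a cover of the right-hand side by length-$N$ cylinders (each of diameter $b^{-N}$) and control its cardinality. Because digits take integer values in $\{0,1,\dots,b-1\}$, the infimum inside the sum equals $0$ when the prediction is correct and $1$ otherwise, so membership in the bracketed set is equivalent to the block function $E$ predicting $\omega_n$ incorrectly at no more than $N(s+\epsilon)$ many positions $n<N$.

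The second step is the counting. Given $E \in \sE_\ell$, a tail $(\omega_N,\dots,\omega_{N+\ell-1}) \in b^\ell$, a subset $S \subseteq \{0,\dots,N-1\}$ with $|S| \leq N(s+\epsilon)$, and an assignment of digits to positions in $S$, one reconstructs $(\omega_0,\dots,\omega_{N-1})$ by reading right-to-left: for $n \notin S$ set $\omega_n = E(\omega_{n+1},\dots,\omega_{n+\ell})$, and for $n \in S$ use the prescribed value. Consequently the number of distinct length-$N$ prefixes is at most
\[
|\sE_\ell| \cdot b^\ell \cdot \sum_{k \leq N(s+\epsilon)} \binom{N}{k} b^k .
\]

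For the third step I invoke the standard entropy estimate for partial binomial sums. Since $s+\epsilon \leq \tfrac{b-1}{b} < \tfrac{b}{b+1}$, the map $q \mapsto H(q) + q\log b$ is increasing on $[0,s+\epsilon]$, and its maximum on this interval is attained at the right endpoint; Stirling then gives $\binom{N}{k} b^k \leq \exp\!\bigl(N(H(s+\epsilon) + (s+\epsilon)\log b)\bigr)$ uniformly in $k \leq N(s+\epsilon)$ up to subexponential factors, and the full sum is bounded by $(N+1)$ times this. Multiplying by the diameter term $b^{-N\tau}$, the covering sum is at most a polynomial in $N$ times $b^{N(\frac{1}{\log b}H(s+\epsilon) + (s+\epsilon) - \tau)}$, which tends to $0$ as $N \to \infty$ by the choice of $\tau$; since this holds for all such $\tau$, the claimed Hausdorff dimension bound follows.

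The only genuinely delicate point is the Stirling estimate, but the critical-point analysis is routine once one observes that $s+\epsilon$ lies below the maximum of $H(q)+q\log b$. Everything else is bookkeeping: the factor $|\sE_\ell|\cdot b^\ell$ is independent of $N$ and gets absorbed into the subexponential terms, and the constraint $N \geq N_0$ plays no role beyond allowing us to let $N \to \infty$. Note we use the crude $b^k$ rather than $(b-1)^k$ for values at error positions, which suffices for the stated bound and parallels Bernay's argument.
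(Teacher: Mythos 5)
The paper itself does not supply a proof of this lemma; it is attributed to Bernay \cite{Bernay} and stated without argument. Your proposal is a correct, self-contained reconstruction of the standard covering-and-counting argument, and it is almost certainly what Bernay does. The three steps — (i) reduce to covering by length-$N$ cylinders via the observation that the infimum is a $0/1$ error indicator, (ii) bound the number of cylinders by $|\sE_\ell|\cdot b^\ell\cdot\sum_{k\leq N(s+\epsilon)}\binom{N}{k}b^k$ via the right-to-left reconstruction from $(E,\text{tail},S,\text{values on }S)$, and (iii) apply $\binom{N}{k}\leq e^{NH(k/N)}$ together with monotonicity of $q\mapsto H(q)+q\log b$ on $[0,b/(b+1))$ — are all sound, and the bookkeeping (cylinder diameter $b^{-N}$, subexponential factors absorbed, $N\geq N_0$ only mattering to let $N\to\infty$) is handled properly.

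One small inaccuracy in the writeup: you assert $s+\epsilon\leq\frac{b-1}{b}$, but the hypotheses only give $s\leq\frac{b-1}{b}$ with $\epsilon>0$ unconstrained, so $s+\epsilon$ could exceed $\frac{b-1}{b}$. This does not affect the validity of the lemma or the substance of your argument, for two reasons. First, your monotonicity step only needs $s+\epsilon<\frac{b}{b+1}$, which is weaker. Second, whenever $s+\epsilon\geq\frac{b-1}{b}$ (and in particular whenever $s+\epsilon\geq\frac{b}{b+1}$), the claimed right-hand side $\frac{1}{\log b}H(s+\epsilon)+(s+\epsilon)$ already exceeds $1$ — one checks that $q\mapsto\frac{1}{\log b}H(q)+q$ is $>1$ at $q=\frac{b-1}{b}$, increases to its maximum $\frac{\log(b+1)}{\log b}$ at $q=\frac{b}{b+1}$, then decreases to $1$ at $q=1$ — so the lemma is vacuous there since $\dim_H A(N_0,\ell,s,\epsilon)\leq 1$. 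Adding a sentence to dispose of this trivial case would make the writeup airtight.
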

This implies the following upper bound on $\dim_H(A_3(s))$.
\begin{lem}
For $s \in \bra{0,\frac{b-1}{b}}$
$$
\dim_H A_3(s) \leq \frac{1}{\log b} H(s) + s.
$$
\end{lem}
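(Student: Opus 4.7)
The plan is to deduce this upper bound from the decomposition
$$A_3(s) = \bigcap_{\epsilon > 0} \bigcup_{\ell = 1}^\infty \bigcup_{N_0 = 1}^\infty A(N_0,\ell,s,\epsilon)$$
stated just before Bernay's lemma, by combining countable stability of Hausdorff dimension with the bound given in that lemma and then sending $\epsilon \to 0^+$.

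First, fix $\epsilon > 0$. Countable stability of Hausdorff dimension applied to the double union over $(\ell, N_0)$ yields
$$\dim_H \left( \bigcup_{\ell=1}^\infty \bigcup_{N_0=1}^\infty A(N_0,\ell,s,\epsilon) \right) = \sup_{\ell,\, N_0} \dim_H A(N_0,\ell,s,\epsilon),$$
and by the preceding lemma of Bernay each summand is bounded above by $\frac{1}{\log b} H(s+\epsilon) + s + \epsilon$. Hence the countable union also satisfies this bound.

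Next, monotonicity of Hausdorff dimension applied to the inclusion $A_3(s) \subseteq \bigcup_{\ell, N_0} A(N_0,\ell,s,\epsilon)$ (which holds for every $\epsilon > 0$ by the displayed intersection identity) gives
$$\dim_H A_3(s) \leq \frac{1}{\log b} H(s+\epsilon) + s + \epsilon$$
for every $\epsilon > 0$. Finally, since $H(t) = -t \log t - (1-t) \log (1-t)$ is continuous at $s \in [0, \frac{b-1}{b}]$ (with the usual convention $0 \log 0 = 0$), letting $\epsilon \to 0^+$ on a countable sequence of positive rationals yields the desired inequality
$$\dim_H A_3(s) \leq \frac{1}{\log b} H(s) + s.$$
There is no substantive obstacle: the nontrivial content sits entirely in Bernay's lemma, which is imported from \cite{Bernay}; the remaining argument is the routine two-step dismantling of the $\bigcap_\epsilon \bigcup_{\ell, N_0}$ form by monotonicity and countable stability of $\dim_H$, followed by continuity of $H$.
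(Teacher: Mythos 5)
Your proof is correct and follows essentially the same route as the paper: the paper presents the decomposition of $A_3(s)$ and Bernay's lemma and then states the conclusion ``This implies the following upper bound'' without spelling out the deduction, which is precisely the routine combination of countable stability, monotonicity, and continuity of $H$ that you supply.
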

Thus, we have proven Theorem~\ref{HD}.

\bibliographystyle{amsplain}


\providecommand{\bysame}{\leavevmode\hbox to3em{\hrulefill}\thinspace}
\providecommand{\MR}{\relax\ifhmode\unskip\space\fi MR }
\providecommand{\MRhref}[2]{%
  \href{http://www.ams.org/mathscinet-getitem?mr=#1}{#2}
}
\providecommand{\href}[2]{#2}

\end{document}